\newtheorem{theorem}{{\sc Theorem}}[section]
\newtheorem{corollary}[theorem]{{\sc Corollary}}
\newtheorem{lemma}[theorem]{{\sc Lemma}}
\newtheorem{proposition}[theorem]{{\sc Proposition}}
\theoremstyle{remark}
\newtheorem{remark}[theorem]{{\sc Remark}}
\newtheorem*{acknowledgements}{\sc Acknowledgments}
\theoremstyle{definition}
\newtheorem{definition}[theorem]{\sc definition}
\newtheorem{example}[theorem]{\sc example}
\newcommand{\R}{\mathbb{R} }
\newcommand{\N}{\mathbb{N} }
\newcommand{\A}{\mathcal{A}}
\newcommand{\B}{\mathcal{B}}
\newcommand{\F}{\mathcal{F}}
\newcommand{\W}{\mathcal{W}}
\newcommand{\calL}{\mathcal{L}}
\newcommand{\al}{\alpha}
\newcommand{\Om}{\Omega}
\newcommand{\abquer}{\overline{(a,b)}}
\newcommand{\XB}{X^{(B)}}
\newcommand{\Btilde}{\tilde{B}}
\newcommand{\XBtilde}{X^{(\tilde{B})}}
\newcommand{\Xhat}{\hat{X}}
\newcommand{\XBm}{X^{(B,m)}}
\providecommand{\abs}[1]{\lvert #1\rvert}
\providecommand{\babs}[1]{\bigl\lvert #1\bigr\rvert}
\providecommand{\Babs}[1]{\Bigl\lvert #1\Bigr\rvert}
\providecommand{\fnorm}[1]{\lVert #1\rVert_\infty}
\DeclareMathOperator{\Var}{Var}
\DeclareMathOperator{\sign}{sign}
\DeclareMathOperator{\Lip}{Lip}
\renewcommand{\phi}{\varphi}
\renewcommand{\epsilon}{\varepsilon}
\newcommand{\eps}{\varepsilon}
\renewcommand{\rho}{\varrho}
\begin{document}
\title{Distributional transformations without orthogonality relations}
\author{Christian D\"obler}
\thanks{Technische Universit\"at M\"unchen, Fakult\"at f\"ur Mathematik, Lehrstuhl f\"ur Mathematische Physik, D-85740 M\"unchen, Germany. \\
christian.doebler@tum.de\\
{\it Keywords:} Stein's method, distributional transformations, zero bias transformation, size bias transformation, equilibrium distribution, Stein characterizations, higher order Stein operators}
\begin{abstract}
Distributional transformations characterized by equations relating expectations of test functions weighted by a given biasing function on the original distribution to expectations of the test function's higher derivatives with respect to the transformed distribution play a great role in Stein's method and were, in great generality, first considered by Goldstein and Reinert \cite{GolRei05b}. We prove two abstract existence and uniqueness results for such distributional transformations, generalizing their $X-P$ bias transformation.
On the one hand, we show how one can abandon previously necessary orthogonality relations by subtracting an explicitly known polynomial depending on the test function from the test function itself. On the other hand, we prove that for a given nonnegative integer $m$ it is possible to obtain the expectation 
of the $m$-th derivative of the test function with respect to the transformed distribution in the defining equation, even though the biasing function may have $k<m$ sign changes, if these two numbers have the same parity.
We explain, how these results can be used to guarantee the existence of two different generalizations of the zero bias transformation by Goldstein and Reinert \cite{GolRei97}. Further applications include the derivation of Stein 
type characterizations without needing to solve any Stein equation and the presentation of a general framework of estimating the distance of the distribution of a given real random variable $X$ to that of a random variable 
$Z$, whose distribution is characterized by some $m$-th order linear differential operator. We also explain the fact that, in general, the biased distribution depends on the choice of the sign change points, if these are ambiguous. 
This new phenomenon does not appear in the framework from \cite{GolRei05b}.
\end{abstract}
\maketitle

\section{Introduction}
\label{intro}
Distributional transformations play a great role in Stein's method in connection with certain coupling constructions, which are often an essential tool for bounding the quantities arising from Stein's equation. Important and well studied examples are given by the well known size bias transformation (see e.g. \cite{GolRin96}, \cite{AGK} or \cite{ArrGol}) 
and the zero bias transformation, which was introduced in \cite{GolRei97} for mean zero random variables with finite and strictly positive variance. For an introduction to Stein's method for normal approximation we refer to the book \cite{CGS}, which includes an extensive discussion of the use of various coupling constructions in Stein's method. For a general introduction to Stein's method we refer to the book \cite{BarCh}.\\
Recall that for a nonnegative random variable $X$ with finite and positive mean $\mu$ one says that a random variable $X^s$ has the $X$\textit{-size biased distribution} if the identity $E[Xf(X)]=\mu E[f(X^s)]$ holds for all bounded and measurable functions $f$ on $[0,\infty)$. Existence of the $X$-size biased distribution is easily seen by just 
letting the distribution of $X^s$ have Radon-Nikodym derivative $x\mapsto x/\mu$ with respect to the distribution of $X$. In contrast, if $X$ is a given real-valued random variable with variance $\sigma^2\in(0,\infty)$ and if $E[X]=0$, then a random variable $X^*$ is said to have the $X$\textit{-zero biased distribution} if 
$E[Xf(X)]=\sigma^2E[f'(X^*)]$ for all Lipschitz continuous functions $f$ on $\R$. It was shown in \cite{GolRei97} that, for a given real-valued random variable $X$, the $X$-zero biased distribution exists uniquely if and only if $0<E[X^2]<\infty$ and $E[X]=0$ and that the distribution of $X^*$ is always absolutely continuous with respect to the Lebesgue 
measure.\\
In \cite{GolRei05b}, given a real-valued random variable $X$, an integer $m\geq0$  and a function $P$ on $\R$, Goldstein and Reinert addressed the general problem of when a random variable $X^{(P)}$ and a constant $\alpha$ exist such that 
\begin{equation}\label{GRid}
E\bigl[P(X)F(X)\bigr]=\alpha E\bigl[F^{(m)}\bigl(X^{(P)}\bigr)\bigr] 
\end{equation}
holds for a sufficiently large class of functions $F$ on $\R$. Their most general result in this direction, Theorem 2.1 of \cite{GolRei05b}, guarantees existence and uniqueness of the distribution for such a random variable $X^{(P)}$, if $P$ has exactly $m$ sign changes on $\R$ and if there exists an $\alpha>0$ such that the \textit{orthogonality relations} 
$E[X^kP(X)]=m! \alpha\delta_{k,m}$ hold for $k=0,1,\dotsc,m$. \\
The main purpose of the present paper is to generalize Theorem 2.1 of \cite{GolRei05b} in two respects: Firstly, in Theorem \ref{maintheo} below, we make sure that one can do without the orthogonality relations by replacing the term $F(X)$ on the left hand side of \eqref{GRid} by $F(X)-L_F(X)$, where $L_F$ is an explicit polynomial of degree 
at most $m-1$, which depends on $F$ and the sign change points of $P$. We further show that the distribution of $X^{(P)}$ is always absolutely continuous with respect to the Lebesgue measure, if $m\geq1$. Secondly, we consider the case that the number $k$ of sign changes of the function $P$ is strictly smaller than the order $m$ of the derivative we would like 
to have on the right hand side of \eqref{GRid}. Our general existence and uniqueness result, Theorem \ref{genmaintheo}, which is in fact a generalization of Theorem \ref{maintheo}, makes sure that the desired distributional transformation exists, if we additionally assume that $k$ and $m$ have the same parity.
Although this paper is mainly intended to extend the abstract theory of distributional transformations, we present some worked out and some potential applications of this theory to first and higher order Stein operators 
in Section \ref{examples}. The paper is structured as follows: In Section \ref{s2} we state and prove our main abstract theorems, Theorem \ref{maintheo} and Theorem \ref{genmaintheo}. In Subsection \ref{s21}, we state 
Theorem \ref{maintheo} on $m$ sign changes and give a probabilistic proof, whereas in Subsection \ref{s22} we consider the more general case of $k\leq m$ sign changes. 
In Section \ref{examples} we illustrate how our abstract results can be used to prove Stein type characterizations of distributions without solving the Stein equation and prove two more abstract results, which guarantee 
the existence of certain distributional transformations corresponding to some higher order linear Stein operators. We show in the one- and two-dimensional cases, how these results can in principle be used to estimate the distance of a given distribution on $\R$ to one that is a fixed point of the distributional transformation, which is associated to the given linear operator. We also give a simple example which makes clear that 
the distributional transformations considered here, are in general sensitive to the specific choice of sign changes of the biasing function, if these are ambiguous. 
Finally, in Section \ref{anproof} we give an analytical proof of Theorem \ref{maintheo}, which invokes the Riesz representation theorem and which was our original proof of this result. 
Since we prefer to use the symbol $P$ for probabilty measures, from now on we denote the biasing function by $B$ instead.
 
\section{Main abstract results and discussion}\label{s2}
Let $B:\R\rightarrow\R$ be a measurable function, which will henceforth be called a \textit{biasing function}. We say $B$ has no sign changes, if it is either nonnegative or nonpositive on $\R$. For $m\in\{1,2,\dotsc\}$ we say that $B$ has $m$ sign changes occuring at the points $x_1<x_2<\ldots<x_m$, if the following holds: Letting  
$J_1:=(-\infty,x_1], J_2:=(x_1,x_2],\dotsc, J_m:=(x_{m-1},x_m]$ and $J_{m+1}:=(x_m,\infty)$ we either have for each integer $1\leq k\leq m+1$ and for all $x\in J_k$ that $(-1)^{m+1-k} B(x)\geq0$ or for each integer $1\leq k\leq m+1$ and for all $x\in J_k$ it holds that $(-1)^{m+1-k} B(x)\leq0$. Note that the first contingency is equivalent to 
$\prod_{k=1}^m(x-x_k)B(x)\geq0$ for all $x\in\R$, whereas the second case means the same as 
$\prod_{k=1}^m(x-x_k)B(x)\leq0$ on $\R$. As was already noted in \cite{GolRei05b}, the points where the sign changes occur may not be 
unique if there are non-trivial subintervals of $\R$, where $B$ is identically equal to zero. Note also that this definition is slightly more general than the one in \cite{GolRei05b} in that they would additionally demand the existence of an $x\in J_k$ with $B(x)\not=0$ for each $k=1,2,\dotsc, m+1$. This generalization also implies that a given $B$ may be 
considered to have both, $m$ and $m'$ sign changes, where $m\not=m'$, if there are non-trivial subintervals of $\R$, where $B$ is identically equal to zero. Throughout, for $m\in\{1,2,\dotsc\}$, we denote by $\F^m$ the class 
of all functions $F\in C^{m-1}(\R)$ such that $F^{(m-1)}$ is still Lipschitz-continuous. For $m=0$, we denote by $\F^0$ the class of all bounded and Borel-measurable functions on $\R$. Further, we adopt the standard conventions that empty sums are set equal to zero and empty products are set equal to one.

For a function $F$ on $\R$ and $m\geq0$ real numbers $x_1<x_2<\ldots<x_m$ we define the polynomial $L_F:=L_{F;x_1,\dotsc,x_m}$ of degree at most $m-1$ by $L_F:=0$ if $m=0$ and for $m\geq1$ we define $L_F$ to be the interpolation polynomial corresponding to the function $F$ and the nodes $x_1,\dotsc,x_m$, i.e.
\begin{equation}\label{LF}
 L_F(x):=\sum_{k=1}^{m} F(x_k)\prod_{\substack{j=1\\ j\not=k}}^m\frac{x-x_j}{x_k-x_j}\,.
\end{equation}

\subsection{Biasing functions with $m$ sign changes}\label{s21}
In this Subsection we give a proof of the following theorem, which is a generalization of Theorem 2.1 in \cite{GolRei05b}.

\begin{theorem}\label{maintheo}
 Let $m$ be a nonnegative integer and let $B:\R\rightarrow\R$ be a measurable biasing function having $m\geq0$ sign changes at the points $x_1<x_2<\ldots<x_m$. If $m=0$, suppose that $B$ is nonnegative on $\R$ and if $m\geq1$, suppose that $B$ is nonnegative on $J_{m+1}$. Assume further that $X$ is a real-valued random variable on some probability space $(\Om,\A,P)$ such that $E\abs{X^j B(X)}<\infty$ for $j=0,1,\dotsc,m$ and 
\begin{equation*}
 \alpha:=\frac{1}{m!}E\Bigl[B(X)(X-x_m)(X-x_{m-1})\cdot\ldots\cdot(X-x_1)\Bigr]\not=0\,.
\end{equation*}
Then, $\alpha$ is necessarily positive and there exists a unique distribution for a random variable $X^{(B)}$ such that for all $F\in\F^m$ we have 
\begin{align}\label{mainid}
 \alpha E\Bigl[F^{(m)}\bigl(X^{(B)}\bigr)\Bigr]=E\Bigl[B(X)\bigl(F(X)-L_F(X)\bigl)\Bigr]\,,
\end{align}
whith $L_F$ as defined in \eqref{LF}. Furthermore, if $m\geq1$, then the distribution of $X^{(B)}$ is absolutely continuous with respect to the Lebesgue measure.
\end{theorem}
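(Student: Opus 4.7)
The plan is to construct $X^{(B)}$ explicitly via the Peano kernel of the polynomial interpolation at $x_1 < \ldots < x_m$. First, to see $\alpha > 0$, observe that $B \geq 0$ on $J_{m+1}$ together with $\prod_{k=1}^m(x-x_k) > 0$ on $J_{m+1}$ forces the first of the two contingencies in the definition of $m$ sign changes, so $\prod_{k=1}^m (x-x_k)B(x) \geq 0$ on all of $\R$; hence $\alpha \geq 0$, and the nonvanishing hypothesis yields $\alpha > 0$. Next, for $F \in \F^m$, the Peano kernel representation of the interpolation error reads
\[
F(x) - L_F(x) = \int_{\R} K(x,t)\,F^{(m)}(t)\,dt,\qquad K(x,t) := \frac{(x-t)_+^{m-1}}{(m-1)!} - \sum_{k=1}^m \frac{(x_k-t)_+^{m-1}}{(m-1)!}\prod_{j\neq k}\frac{x-x_j}{x_k-x_j}.
\]
A direct check shows that $K(x,\cdot)$ is compactly supported in $t$: for $t$ sufficiently negative the Lagrange interpolation of $y\mapsto(y-t)^{m-1}$ is exact, and for $t > \max\{x,x_m\}$ all truncated powers vanish. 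I would then define $w(t) := E[B(X)K(X,t)]$ and propose $p := w/\alpha$ as the density of $X^{(B)}$.

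The heart of the argument is the claim $B(x)K(x,t)\geq 0$ for all $x,t$. For fixed $t$, regard $G(x) := K(x,t) = g_t(x) - L_{g_t}(x)$ with $g_t(x) := (x-t)_+^{m-1}/(m-1)!$. Then $G(x_k)=0$ for each $k$ and $G^{(m-1)}(x) = \mathbf{1}_{\{x>t\}}$ is nondecreasing. By induction on $m$, any function $G$ with these two properties satisfies $G(x)\prod_{k=1}^m(x-x_k) \geq 0$: the inductive step uses Rolle's theorem to locate interlacing zeros $y_k \in (x_k,x_{k+1})$ of $G'$, applies the inductive claim to $G'$ (whose $(m-2)$-nd derivative is still nondecreasing), and integrates back from each $x_k$ to recover the sign of $G$. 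Combined with $B(x)\prod_{k=1}^m(x-x_k) \geq 0$ from the first paragraph, this gives $B(x)G(x) \geq 0$, hence $w\geq 0$.

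To compute the total mass, specialize the Peano formula to $F(x) = x^m/m!$, whose interpolation error is $\prod_k(x-x_k)/m!$: this yields $\int K(x,t)\,dt = \prod_k(x-x_k)/m!$ for each $x$, and Tonelli (permitted by nonnegativity) gives $\int w(t)\,dt = E\bigl[B(X)\prod_k(X-x_k)/m!\bigr] = \alpha < \infty$. Thus $p = w/\alpha$ is a probability density; identity \eqref{mainid} follows for general $F \in \F^m$ by Fubini, using $\|F^{(m)}\|_\infty \leq \Lip(F^{(m-1)}) < \infty$ together with the compact support and polynomial growth of $K(x,\cdot)$ and the moment assumption $E\abs{X^jB(X)}<\infty$ to control $E\bigl[B(X)\int\abs{K(X,t)}\,dt\bigr]$. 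Uniqueness follows because any bounded measurable $\phi$ equals $F^{(m)}$ for some $F\in\F^m$ (take an $m$-fold antiderivative), so \eqref{mainid} determines $E[\phi(X^{(B)})]$ for all such $\phi$, and absolute continuity is immediate from the explicit density. The main obstacle I anticipate is the sign lemma above: since $g_t$ is only piecewise smooth ($g_t^{(m-1)}$ has a jump at $t$), the Rolle/induction argument must be formulated for functions whose $(m-1)$-st derivative is merely nondecreasing rather than $C^1$. This is handled either by mollifying $g_t$ and passing to a limit, or by carefully restating the inductive hypothesis in this weaker regularity class, where the essential fact used is that a nondecreasing function admits at most one sign change.
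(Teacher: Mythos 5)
Your proposal is correct in outline, but it takes a genuinely different route from the paper, which offers two proofs: a probabilistic one that realizes $\XB$ explicitly as $x_m+\sum_{k=1}^m\bigl(\prod_{i=k}^m U_i\bigr)(x_{k-1}-x_k)$ with $x_0=Y$, where $Y$ has density proportional to $\prod_{i=1}^m(y-x_i)B(y)$ with respect to $\calL(X)$ and $U_j$ has density $ju^{j-1}$ on $(0,1)$, the identity \eqref{mainid} being verified by induction on $m$; and an analytical one that applies the Riesz representation theorem to the positive linear functional $f\mapsto(\alpha m!)^{-1}E\bigl[B(X)\bigl(G_f(X)-L_{G_f}(X)\bigr)\bigr]$ with $G_f=I_{x_m}^m f$, whose positivity rests on the sign lemma (Lemma \ref{hauptlemma1}) that $G_f-L_{G_f}$ has the sign pattern of $\prod_{k=1}^m(x-x_k)$ for nonnegative continuous $f$. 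Your Peano-kernel argument is in effect the $t$-pointwise (kernel-level) version of that sign lemma: the claim $K(x,t)\prod_{k=1}^m(x-x_k)\geq0$ is the classical constant-sign property of the interpolation-error kernel (equivalently, nonnegativity of the B-spline with knots $x_1,\dotsc,x_m,x$), and integrating it in $t$ against $f\geq0$ recovers Lemma \ref{hauptlemma1}; your Rolle-type induction is structurally close to the paper's Lemmas \ref{hilfslemma1} and \ref{hauptlemma1}. What your route buys is an explicit density $p(t)=\alpha^{-1}E\bigl[B(X)K(X,t)\bigr]$ for every $m\geq1$, which gives existence, \eqref{mainid} and absolute continuity in one stroke and generalizes the $m=1$ formula of Remark \ref{remmt} (d) as well as the iterative formula of Remark \ref{denformula}; what the paper's probabilistic construction buys is a concrete coupling of $\XB$ with $X$ (the point of such transformations in applications) and the fact that the Goldstein--Reinert construction survives without orthogonality. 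A few points to tidy up, none fatal: $G^{(m-1)}=1_{\{x>t\}}-c$ with $c$ the constant $(m-1)$-st derivative of $L_{g_t}$, not $1_{\{x>t\}}$ itself (harmless, since only monotonicity is used); the regularity issue you flag in the Rolle induction is real but is indeed settled by mollifying $g_t$ or by stating the induction for nondecreasing $(m-1)$-st derivatives; the case $m=0$ falls outside your kernel framework and needs the trivial Radon--Nikodym argument; and in the uniqueness step a bounded measurable $\phi$ is only an a.e.\ $m$-th derivative of its $m$-fold antiderivative, so argue as the paper does with $\phi\in C_c(\R)$, for which $I^m\phi\in\F^m$ has exact $m$-th derivative $\phi$ and which is already a separating class.
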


\begin{remark}\label{remmt}
\begin{enumerate}[(a)]
\item If $X$ and $B$ additionally satisfy the orthogonality conditions\\
$E[X^j B(X)]=0$ for all $j=0,1,\dotsc,m-1$, then the distribution $\calL(\XB)$ of $\XB$ reduces to the $X-B$ biased distribution from \cite{GolRei05b} as is easily seen by writing the polynomial $L_F$ in terms 
of the monomials $1,X,\dotsc,X^{m-1}$. Also, in this case for the same reason we have $\alpha=(m!)^{-1}E[X^mB(X)]$. So it is justified to call the distribution of $\XB$ the \textit{generalized $X-B$ biased distribution}. 
\item Note that if, according to our definition of sign changes,  $B$ has both, $m$ and $m'$ sign changes for $m\not=m'$, then we see from \eqref{mainid} that these two points of view lead to different distributions for $\XB$. Also, if we may consider $B$ to have sign changes at $x_1<\ldots<x_m$ as well as at $y_1<\ldots<y_m$, then the 
resulting $\alpha$'s and, again, the distributions of $\XB$'s are different, in general, which is in contrast to the theory from \cite{GolRei05b}, where such ambiguities are ruled out by their orthogonality asumptions on $X$ with respect to $B$. Thus, one should actually denote the variable $\XB$ by $X^{(B;x_1,\dotsc,x_m)}$ to 
prevent these ambiguities. We illustrate this phenomenon for the case $m=1$ in Example \ref{ambi} below.
We will, however, not do so but rather assume that it is understood or mention how many sign changes at what exact points the function $B$ is supposed to have.    
\item For the existence part of Theorem \ref{maintheo} we give two different proofs: An analytical proof, which uses the Riesz representation theorem, and a probabilistic proof, which relies on an explicit construction of the random variable $\XB$. Remarkably, the same construction of $\XB$ as in \cite{GolRei05b} is still valid in this 
more general setting. However, we were not able to generalize the proof of Theorem 2.1 in \cite{GolRei05b} to a proof of our Theorem \ref{maintheo}. 
\item In the case $m=1$, one can easily show that the function $p$ given by 
\begin{equation*}
 p(t)=\frac{1}{\alpha}E\Bigl[B(X)\bigl(1_{\{x_1\leq t\leq X\}}-1_{\{X< t<x_1\}}\bigr)\Bigr]
\end{equation*}
is a probability density function on $\R$, whose associated distribution satisfies the requirements for the generalized $X-B$ biased distribution, thus yielding a direct proof of existence and absolute continuity in this case.
\item Note that if $F\in\F^m$, then one can easily show by induction on $k=0,1,\dotsc,m$ that there exist finite constants $c_k>0$ such that 
\begin{equation*}
 \abs{F^{(m-k)}(x)}\leq c_k\bigl(1+\abs{x}^{k}\bigr)
\end{equation*}
for each $x\in\R$. Hence, if $X$ satisfies the conditions from Theorem \ref{maintheo}, then\\
$E\bigl[B(X)F(X)\bigr]$ exists for each $F\in\F^m$.
\item The assumption $E\abs{X^j B(X)}<\infty$ for $j=0,1,\dotsc,m$ is easily seen to be equivalent to $E\abs{B(X)}<\infty$ and $E\abs{X^mB(X)}<\infty$.
\end{enumerate}
\end{remark}

\begin{proof}[Proof of uniqueness in Theorem \ref{maintheo}]
The argument for uniqueness is the same as in \cite{GolRei05b} and is only included for reasons of completeness. Let $\mu$ and $\nu$ both be probability measures on $(\R,\B(\R))$ such that random variables $V\sim\mu$ and $W\sim\nu$ satisfy the conditions on $\XB$, i.e.
\begin{align}\label{uni}
\alpha E\bigl[F^{(m)}(V)\bigr]=E\Bigl[B(X)\bigl(F(X)-L_F(X)\bigl)\Bigr]=\alpha E\bigl[F^{(m)}(W)\bigr]
\end{align}
holds for all $F\in\F^m$.
Then, for an arbitrary function $f\in C_c(\R)$, the class of continuous functions with compact support, consider the function $F:=I^m f:=I^m(f)$ on $\R$. Here, $If(x):=\int_0^x f(t)dt$ and $I^m$ is the $m$-th iterate of $I$. 
Then, $F^{(m)}=f$ and, since $\fnorm{f}<\infty$, it follows from Remark \ref{remmt} (e) that $E\abs{F(X)B(X)}<\infty$ and from \eqref{mainid} and \eqref{uni} we have that 
\begin{align*}
\int_\R f(x)d\mu(x)&=E\bigl[F^{(m)}(V)\bigr]=E\bigl[F^{(m)}(W)\bigr]=\int_\R f(x)d\nu(x)\,. 
\end{align*}
Since the class $C_c(\R)$ is seperating probability measures, this implies that $\mu=\nu$.
\end{proof}

\begin{proof}[Probabilistic existence proof]
From the nonnegativity of $B$ on $J_{m+1}$ we know that 
\begin{equation}\label{casep}
 B(x)\prod_{j=1}^m(x-x_j)\geq0\quad\text{for all }x\in\R\,.
\end{equation}
Thus, if $\alpha\not=0$, it is necessarily positive.
Now, we give the explicit construction of the random variable $\XB$ from \cite{GolRei05b}.\\
Let $Y,U_1,U_2,\dotsc,U_m$ be independent random variables such that $U_j$ has the density $p_j(u):=ju^{j-1}1_{(0,1)}(u)$ ($1\leq j\leq m$) and $Y$ has distribution $\nu$ given by 
\begin{equation}\label{defnu}
 d\nu(y):=\frac{1}{\alpha m!}\prod_{i=1}^m(y-x_i)B(y)d\mu(y)\,,
\end{equation}
where $\mu$ is the distribution of $X$. Note that, by \eqref{casep} and the definition and positivity of $\alpha$,
$\nu$ is indeed a probability measure and, hence, such a $Y$ exists. 
Now, we define the random variable 
\begin{equation}\label{XB}
 \XB:=x_m+\sum_{k=1}^m\left(\prod_{i=k}^mU_i\right)(x_{k-1}-x_k)\,,
\end{equation}
where $x_0:=Y$. We claim that $\XB$ satisfies \eqref{mainid}. This claim will be proved by induction on $m=0,1,\dotsc$. If $m=0$, then the claim reduces to 
\[E[B(X)F(X)]=\alpha E[F(Y)]\,,\]
where $\alpha=E[B(X)]$, because $\XB=x_0=Y$ in this case. But the validity of this identity immediately follows from the definition of $\nu=\calL(Y)$ for $m=0$ in \eqref{defnu}. Now, suppose that $m\geq1$ and that the claim is proved for $m-1$. We consider the function $\Btilde(x):=(x-x_m)B(x)$ which has $m-1$ sign changes occuring at the points $x_1<\ldots<x_{m-1}$ and which is such that $\Btilde(x)\geq0$ for $x\geq x_{m-1}$. Furthermore, we have 
\begin{equation*}
 X^{(B)}=(1-U_m)x_m+U_m\left(x_{m-1}+\sum_{k=1}^{m-1}\left(\prod_{i=k}^{m-1}U_i\right)(x_{k-1}-x_k)\right)
\end{equation*}
and, since 
\[\prod_{i=1}^m(y-x_i)B(y)=\prod_{i=1}^{m-1}(y-x_i)\Btilde(y)\]
and 
\[\al m!=E\Bigl[B(X)\prod_{i=1}^{m}(X-x_i)\Bigr]=E\Bigl[\Btilde(X)\prod_{i=1}^{m-1}(X-x_i)\Bigr]\,,\]   
we conclude from the induction hypothesis that 
\begin{equation*}
\XBtilde:= x_{m-1}+\sum_{k=1}^{m-1}\left(\prod_{i=k}^{m-1}U_i\right)(x_{k-1}-x_k)
\end{equation*}
with the same $Y=x_0$ satisfies the assumptions on a random variable with the generalized $X-\Btilde$ biased distribution. Now, let $F\in\F^m$ be given and define $G:=F'$. Furthermore, for $t\in[0,1]$ we let $G_t(x):=G(x_m+t(x-x_m))$, noting that $G_t^{(k)}(x)=G^{(k)}(x_m+t(x-x_m))t^k$ for $k=0,1,\dotsc,m-1$ and that each $G_t\in\F^{m-1}$. 
Since $\XB=x_m+U_m(\XBtilde-x_m)$ and $\XBtilde, U_m$ are independent, we have from Fubini's theorem that 
\begin{align*}
\alpha E\Bigl[F^{(m)}(\XB)\Bigr]&=\alpha E\Bigl[G^{(m-1)}\bigl(x_m+ U_m(\XBtilde-x_m)\bigr)\Bigr]\\
&=\alpha\int_0^1 E\Bigl[G^{(m-1)}\bigl(x_m+ t(\XBtilde-x_m)\bigr)\Bigr]mt^{m-1}dt\\
&=\alpha m\int_0^1 E\Bigl[G_t^{(m-1)}\bigl(\XBtilde\bigr)\Bigr]dt\,.
\end{align*}
Noting that 
\[\alpha m=\frac{\al m!}{(m-1)!}=\frac{1}{(m-1)!}E\Bigl[\Btilde(X)\prod_{j=1}^{m-1}(X-x_j)\Bigr]\,,\]
we can thus conclude from the induction hypothesis that  
\begin{align}\label{mteq5}
&\alpha E\Bigl[F^{(m)}(\XB)\Bigr]=\int_0^1 E\Bigl[\Btilde(X)\bigl(G_t(X)-L_{G_t}(X)\bigr)\Bigr]dt\notag\\
&=\int_0^1 E\Biggl[\Btilde(X)\biggl(G_t(X)-\sum_{k=1}^{m-1} G_t(x_k)\prod_{\substack{j=1\\j\not=k}}^{m-1}\frac{X-x_j}{x_k-x_j}\biggr)\Biggr]dt\notag\\
&=E\Biggl[B(X)\int_0^1(X-x_m)\biggl(G_t(X)-\sum_{k=1}^{m-1}G_t(x_k)\prod_{\substack{j=1\\j\not=k}}^{m-1}\frac{X-x_j}{x_k-x_j}\biggr)dt\Biggr]\,.
\end{align}

Now, for each real $z\not=x_m$ 
\begin{align}\label{mteq1}
\int_0^1 G_t(z)dt&=\frac{1}{z-x_m}\int_0^1 F'(x_m+t(z-x_m))(z-x_m)dt\notag\\
&=\frac{1}{z-x_m}\int_{x_m}^z F'(s)ds=\frac{F(z)-F(x_m)}{z-x_m}\,, 
\end{align}
implying that 
\begin{align}\label{mteq2}
(x-x_m)\int_0^1 G_t(x)dt=F(x)-F(x_m)
\end{align}
for each $x\in\R$. From \eqref{mteq1} and \eqref{mteq2} we conclude for $x\in\R$ that 
\begin{align*}
 &(x-x_m)\int_0^1\biggl(G_t(x)-\sum_{k=1}^{m-1} G_t(x_k)\prod_{\substack{j=1\\j\not=k}}^{m-1}\frac{x-x_j}{x_k-x_j}\biggr)dt\\
&=F(x)-F(x_m)-\sum_{k=1}^{m-1}\Bigl(F(x_k)-F(x_m)\Bigr)\frac{x-x_m}{x_k-x_m}\prod_{\substack{j=1\\j\not=k}}^{m-1}\frac{x-x_j}{x_k-x_j}\\
&=F(x)-F(x_m)-\sum_{k=1}^{m-1}F(x_k)\prod_{\substack{j=1\\j\not=k}}^{m}\frac{x-x_j}{x_k-x_j}\\
&\quad + F(x_m)\sum_{k=1}^{m-1}\prod_{\substack{j=1\\j\not=k}}^{m}\frac{x-x_j}{x_k-x_j}\,.
\end{align*}
Now, we notice that for each $x\in\R$ we have 
\begin{equation*}
 \sum_{k=1}^{m-1}\prod_{\substack{j=1\\j\not=k}}^{m}\frac{x-x_j}{x_k-x_j} =1-\prod_{j=1}^{m-1}\frac{x-x_j}{x_{m}-x_j}\,,
\end{equation*}
which is clear from the Lagrange form of the interpolation polynomial corresponding to the constant function $1$ and the nodes $x_1,\dotsc,x_m$. Using this, we obtain that 
\begin{align*}
&(x-x_m)\int_0^1\biggl(G_t(x)-\sum_{k=1}^{m-1} G_t(x_k)\prod_{\substack{j=1\\j\not=k}}^{m-1}\frac{x-x_j}{x_k-x_j}\biggr)dt\\
&=F(x)-\sum_{k=1}^mF(x_k)\prod_{\substack{j=1\\j\not=k}}^{m}\frac{x-x_j}{x_k-x_j}\\
&=F(x)-L_F(x)\,.
\end{align*}
Plugging this into \eqref{mteq5} we see that 
\begin{align*}
\alpha E\Bigl[F^{(m)}(\XB)\Bigr]=E\Bigl[B(X)\bigl(F(X)-L_F(X)\bigr)\Bigr]\,,
\end{align*}
as claimed.
\end{proof}

\begin{proof}[Proof of absolute continuity if $m\geq1$]
 To prove this claim, we use the explicit construction of $\XB$ given in \eqref{XB}. Thus, we have that 
\begin{align*}
 \XB&=x_m+\sum_{k=1}^m\left(\prod_{i=k}^mU_i\right)(x_{k-1}-x_k)\\
&=U_1\left(\prod_{i=2}^m U_i\right)\bigl(Y-x_1\bigr)+ \left(x_m+\sum_{k=2}^m\left(\prod_{i=k}^mU_i\right)(x_{k-1}-x_k)\right)\\
&=:U_1 f(Y,U_2,\dotsc,U_m)+g(U_2,\dotsc,U_m)\,.
\end{align*}
Let $N\in\B(\R)$ be a given set such that $\lambda(N)=0$, where $\lambda$ denotes the Lebesgue measure on the line. Then, 
\begin{equation}\label{ac1}
P\bigl(\XB\in N\bigr)=E\Bigl[P\bigl(\XB\in N\,|\,Y,U_2,\dotsc,U_m\bigr)\Bigr]\,.
\end{equation}
Note that $f(Y,U_2,\dotsc,U_m)=\Bigl(\prod_{i=2}^m U_i\Bigr)\bigl(Y-x_1\bigr)\not=0$ $P$- almost surely, since $P(Y=x_j)=0$ for $1\leq j\leq m$ by the definition of $\nu=\calL(Y)$ in \eqref{defnu} and also $P(U_i=0)=0$ for each $i=1,2,\dotsc,m$. Thus, by independence of $U_1$ and $(Y,U_2,\dotsc,U_m)$ we have for each choice of $y\in\R\setminus\{x_1,\dotsc,x_m\}$ and $u_2,\dotsc,u_m\in(0,1)$ that
\begin{align}\label{ac2}
&P\bigl(\XB\in N\,|\,Y=y,U_2=u_2,\dotsc,U_m=u_m\bigr)\notag\\
&=P\bigl(f(y,u_2,\dotsc,u_m)U_1+g(u_2,\dotsc,u_m)\in N\bigr)\notag\\
&=P\Biggl(U_1\in\frac{N-g(u_2,\dotsc,u_m)}{f(y,u_2,\dotsc,u_m)}\Biggr)\notag\\
&=P\bigl(U_1\in\tilde{N}(y,u_2,\dotsc,u_m)\bigr)\,,
\end{align}
where
\[\tilde{N}(y,u_2,\dotsc,u_m)=\frac{N-g(u_2,\dotsc,u_m)}{f(y,u_2,\dotsc,u_m)}
=\Biggl\{\frac{x-g(u_2,\dotsc,u_m)}{f(y,u_2,\dotsc,u_m)}\,\Bigl|\,x\in N\Biggr\}\,.\]
By the properties of the Lebesgue measure it follows that 
\[\lambda\bigl(\tilde{N}(y,u_2,\dotsc,u_m)\bigr)=\frac{1}{\abs{f(y,u_2,\dotsc,u_m)}}\lambda(N)=0\,,\]
so that we conclude that $P(U_1\in\tilde{N}(y,u_2,\dotsc,u_m))=0$, because $U_1$ has an absolutely continuous distribution. Hence, by \eqref{ac2} also
\[P\bigl(\XB\in N\,|\,Y,U_2,\dotsc,U_m\bigr)=0\quad P-\text{almost surely.}\]
Thus, from \eqref{ac1} we infer that 
$P(\XB\in N)=0$. Hence, the distribution of $\XB$ is absolutely continuous with respect to $\lambda$.
\end{proof}

\begin{remark}\label{denformula}
 With the notation of the above existence proof, from the identity 
\begin{equation*}
 E\Bigl[f\bigl(\XB\bigr)\Bigr]=\int_0^1 E\Bigl[f\bigl(x_m+t(\XBtilde-x_m)\bigr)\Bigr]mt^{m-1}dt\,,
\end{equation*}
valid for bounded and measurable $f$, and an easy change of variable one can easily deduce that for $m\geq2$ the ($\lambda$-a.e. unique) density $p$ of $\XB$ is given by 
\begin{equation*}
 p(x)=m\int_0^1\tilde{p}\Bigl(x_m+\frac{x-x_m}{t}\Bigr)t^{m-2}dt\,,
\end{equation*}
where $\tilde{p}$ is the ($\lambda$-a.e. unique) density of $\XBtilde$. This observation may be used to derive density formulas iteratively, beginning with the case $m=1$, see Remark \ref{remmt} (d). It also gives rise to an inductive proof of absolute continuity of the distribution of $\XB$.
\end{remark}

For the zero-bias and the size-bias transformations it is known that if the distribution of the random variable $X$ is a mixture of the distributions of certain variables $X_s$, $s\in S$, then also the biased distribution of $X$ is a mixture of the biased distributions of the $X_s$ (see \cite{Gol10} for the zero-bias case and 
\cite{AGK} for the size-bias case). This property easily generalizes to our situation. We describe it in the abstract situation with a possibly uncountable number of mixed distributions as in \cite{Gol10}. 
Within most applications, though, the set $S$ below will be equal to $\{1,\dotsc,n\}$ for some $n$ and $I$ and $J$ will be  random indices with values in $S$.\\  
Thus, let $(S,\mathcal{S},\gamma)$ be a probability space and let $K:S\times\B(\R)\rightarrow[0,1]$ be a Markov kernel such that with $\mu_s:=K(s,\cdot)$ the distributions $\mu_s$, $s\in S$, satisfy the assumptions of Theorem \ref{maintheo}. 
A random variable $X$ having distribution $\gamma K$ may be constructed as follows. Let $I$ be independent of the family $(X_s)_{s\in S}$, where $I\sim\gamma$ and $X_s$ has distribution $\mu_s$ for each $s\in S$. Then, $X:=X_I$ has distribution $\gamma K$. For each $s\in S$ let 
$\alpha_s:=E[B(X_s)(X_s-x_1)\cdot\ldots\cdot(X_s-x_m)\bigr]$ and assume that 
\begin{align}\label{mix1}
 E\abs{B(X)}&=\int_S E\abs{B(X_s)}d\gamma(s)<\infty\quad\text{and}\notag\\
E\abs{X^mB(X)}&=\int_S E\abs{X_s^mB(X_s)}d\gamma(s)<\infty\,.
\end{align}
From \eqref{mix1} and Remark \ref{remmt} (f) we conclude that
\begin{equation*}
 0<\alpha:=E\bigl[B(X)(X-x_1)\cdot\ldots\cdot(X-x_m)\bigr]=\int_S\alpha_s d\gamma(s)<\infty\,.
\end{equation*}
Further, for each $s\in S$ let $X_s^{(B)}$ have the generalized $X_s-B$ biased distribution. Let $J$ be independent of the family $(X_s^{(B)})_{s\in S}$ having distribution $P(J\in A):=\int_A\frac{\alpha_s}{\alpha}d\gamma(s)$, $A\in\mathcal{S}$. 

\begin{proposition}\label{mix}
 Under the above assumptions the variable $X^{(B)}:=X_J^{(B)}$ has the generalized $X-B$ biased distribution.
\end{proposition}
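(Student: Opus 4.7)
The plan is to verify directly that $X^{(B)} := X_J^{(B)}$ satisfies the defining identity \eqref{mainid} for the generalized $X$-$B$ biased distribution, and then invoke the uniqueness part of Theorem \ref{maintheo}. The key observation is that the interpolation polynomial $L_F = L_{F;x_1,\dotsc,x_m}$ depends only on $F$ and the sign-change points $x_1,\dotsc,x_m$, which are the same for every $X_s$ (as $B$ is fixed). Hence Theorem \ref{maintheo} may be applied to each $X_s$ with the \emph{same} polynomial $L_F$.

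First I would address integrability. The assumption \eqref{mix1} combined with Remark \ref{remmt} (f) ensures that $E|B(X_s)| < \infty$ and $E|X_s^m B(X_s)| < \infty$ for $\gamma$-a.e.\ $s$, so Theorem \ref{maintheo} genuinely applies $\gamma$-a.s.; moreover Remark \ref{remmt} (e) yields the bound $|F(x) - L_F(x)| \le c(1+|x|^m)$, which together with \eqref{mix1} lets Fubini be applied to integrands of the form $B(X_s)(F(X_s) - L_F(X_s))$ over $(s,\omega)$.

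Now let $F \in \F^m$. Using independence of $J$ and the family $(X_s^{(B)})_{s \in S}$ and conditioning on $J$, I compute
\begin{align*}
\alpha \, E\bigl[F^{(m)}\bigl(X_J^{(B)}\bigr)\bigr]
&= \alpha \int_S E\bigl[F^{(m)}\bigl(X_s^{(B)}\bigr)\bigr] \, \frac{\alpha_s}{\alpha}\, d\gamma(s) \\
&= \int_S \alpha_s \, E\bigl[F^{(m)}\bigl(X_s^{(B)}\bigr)\bigr]\, d\gamma(s) \\
&= \int_S E\bigl[B(X_s)\bigl(F(X_s)-L_F(X_s)\bigr)\bigr]\, d\gamma(s),
\end{align*}
where the last equality is Theorem \ref{maintheo} applied to each $X_s$. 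On the other hand, since $I \sim \gamma$ is independent of $(X_s)_{s \in S}$ and $X = X_I$, Fubini gives
\begin{equation*}
E\bigl[B(X)\bigl(F(X)-L_F(X)\bigr)\bigr]
= \int_S E\bigl[B(X_s)\bigl(F(X_s)-L_F(X_s)\bigr)\bigr]\, d\gamma(s).
\end{equation*}
Comparing the two displays shows that $X^{(B)} = X_J^{(B)}$ satisfies \eqref{mainid} for every $F \in \F^m$, and uniqueness in Theorem \ref{maintheo} finishes the proof.

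There is no real obstacle beyond the bookkeeping; the only delicate point is checking that the various Fubini exchanges are justified, which is why I would discharge the integrability question first using \eqref{mix1} and Remark \ref{remmt} (e)--(f). Everything else is a direct substitution that crucially relies on $L_F$ being the \emph{same} polynomial for all mixture components.
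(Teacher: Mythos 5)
Your proof is correct and is essentially the paper's own argument: both verify the defining identity \eqref{mainid} for $X_J^{(B)}$ by combining Fubini's theorem with Theorem \ref{maintheo} applied to each mixture component $X_s$ (using that $L_F$ is the same polynomial throughout) and then appeal to uniqueness. Your additional care about integrability via \eqref{mix1} and Remark \ref{remmt} (e)--(f) only makes explicit what the paper leaves implicit.
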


\begin{proof}
The easy proof is quite standard: For $F\in\F^m$ we have by Fubini's theorem
\begin{align*}
&\;E\bigl[B(X)\bigl(F(X)-L_F(X)\bigr)\bigr]=\int_S E\bigl[B(X_s)\bigl(F(X_s)-L_F(X_s)\bigr)\bigr]d\gamma(s)\\
&=\int_S \alpha_sE\bigl[F^{(m)}\bigl(X_s^{(B)}\bigr)\bigr]d\gamma(s)
=\alpha \int_S \frac{\alpha_s}{\alpha}E\bigl[F^{(m)}\bigl(X_s^{(B)}\bigr)\bigr]d\gamma(s)\\
&=\alpha \int_S E\bigl[F^{(m)}\bigl(X_s^{(B)}\bigl)\bigr] P(J\in ds)
=\alpha E\bigl[F^{(m)}\bigl(X_J^{(B)}\bigr)\bigr]\,. 
\end{align*}
\end{proof}

It is actually not strictly necessary to assume that $X_s$ satisfies the asumptions of Theorem \ref{maintheo} for each $s\in S$. In fact, assuming \eqref{mix1} it follows from Remark \ref{remmt} (f) that $\alpha_s$ exists for $\gamma$-a.e. $s\in S$ but it might be zero for certain values of $s$. Assuming additionally that $\alpha>0$ for $X=X_I$ and letting 
$\XB_s$ have any fixed distribution if $\alpha_s=0$, then the proof goes through as before, since the distribution of the index $J$ puts mass $0$ to values of $s$ such that $\alpha_s=0$.

\subsection{Biasing functions with fewer than $m$ sign changes}\label{s22}

Although Theorem \ref{maintheo} is already quite general, in practice it might happen that one would like the order $m$ of the derivative on the right hand side of \eqref{mainid} to be larger than the number, say $k$, of sign changes of the function $B$ on the left hand side of \eqref{mainid}. 
For example, if $X$ is a nonnegative random variable with finite and non-zero expectation, then $X^e$ is said to have the equilibrium distribution with respect to $X$, if 
\begin{equation}\label{renew}
 E\bigl[f(X)-f(0)\bigr]=E\bigl[X\bigr]E\bigl[f'(X^e)\bigr]
\end{equation}
holds for all Lipschitz-continuous functions $f$. Couplings with this distributional transformation were successfully used for exponential approximation by Stein's\\ method in \cite{PekRol11} and \cite{PekRol11b}.  Thus, it appears as if in \eqref{renew} we would have $m=1$ but $k=0$, since $B=1$. But, as it turns out, 
this distributional transformations is nevertheless covered by Theorem \ref{maintheo} by letting $B(x):=\sign(x)$, for example. Then, as a function on $\R$, $B$ has exactly one sign change at $x_1=0$ and Theorem \ref{maintheo} may be invoked. Since $X$ was assumed nonnegative, this is not quite reflected in equation \eqref{renew}. 
However, there are cases of distributional transformations, which are used in practice and which are not covered by Theorem \ref{maintheo}. For example, in their analysis of the rate of convergence for the distributional convergence of certain random sums of mean zero random variables to the Laplace distribution, in \cite{PiRen} the authors 
use the fact that for each real valued random variable $X$ such that $E[X]=0$ and $0<E[X^2]<\infty$, there exists a unique distribution for a random variable $X^{(L)}$ such that 
\begin{equation}\label{XL}
 E\bigl[f(X)-f(0)\bigr]=\frac{1}{2}E\bigl[X^2\bigr]E\bigl[f''(X^{(L)})\bigr]
\end{equation}
holds for all continuously differentiable functions $f$ with a Lipschitz derivative. In their final version \cite{PiRen} they prove this by giving an explicit construction of the random variable $X^{(L)}$. 
In the first arXiv version, however, they applied Theorem 2.1 of \cite{GolRei05b} with the distributional transformation given by $B(x)=\sign(x)$ twice in a row, and, in order to do so, they had to make sure that the orthogonality assumptions of that theorem were satisfied. This is why they first had to assume that not only $E[X]=0$ but also $P(X<0)=P(X>0)=1/2$ be satisfied. 
Invoking Theorem \ref{maintheo} instead, we are able to prove the following statement, which even generalizes \eqref{XL} to the class of all $X$ with finite second moment. This result is the main building block 
of a generalization of Theorem \ref{maintheo} to cases, where the number of sign changes of $B$ might disagree with the order of the derivative of the test function $F$.

\begin{proposition}\label{mainprop}
 Let $X$ be a real-valued random variable such that $0<E[X^2]<\infty$. Then, for each $a\in\R$, there exists a unique distribution for a random variable $\Xhat_a$ such that 
\begin{equation}\label{Xhat}
 E\bigl[f(X)-f(a)-f'(a)(X-a)\bigr]=\frac{1}{2}E\bigl[(X-a)^2\bigr]E\bigl[f''(\Xhat_a)\bigr]
\end{equation}
 holds for all continuously differentiable functions $f$ with a Lipschitz derivative. Further, the distribution of $\Xhat_a$ is always absolutely continuous with respect to the Lebesgue measure.
\end{proposition}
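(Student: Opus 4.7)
The plan is to invoke Theorem \ref{maintheo} twice in succession with the biasing function $B(x):=\sign(x-a)$, which has exactly one sign change at $a$ and is nonnegative on $(a,\infty)$. The degenerate case $P(X=a)=1$ (so that $E[(X-a)^2]=0$) is trivial, both sides of \eqref{Xhat} vanishing; henceforth I assume $\alpha_1:=E[|X-a|]>0$.

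A first application of Theorem \ref{maintheo} with $m=1$, $x_1=a$ produces an absolutely continuous random variable $X^{(1)}$ characterised by
\[
  \alpha_1\,E\bigl[k'(X^{(1)})\bigr]=E\bigl[\sign(X-a)\bigl(k(X)-k(a)\bigr)\bigr] \qquad(\ast)
\]
for all $k\in\F^1$. By Remark \ref{remmt}(d) the density of $X^{(1)}$ is explicitly known, and an application of Fubini---legitimated by $E[(X-a)^2]<\infty$---shows that $(\ast)$ in fact extends to every $k\in\F^2$, since $|k'(x)|\le\mathrm{Lip}(k')|x-a|+|k'(a)|$ and $E[|X^{(1)}-a|]<\infty$. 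Choosing $k(x):=\sign(x-a)(x-a)^2/2$ in the extended $(\ast)$, so that $k(a)=0$ and $k'(x)=|x-a|$, then yields $\alpha_2:=E[|X^{(1)}-a|]=E[(X-a)^2]/(2\alpha_1)>0$.

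A second application of Theorem \ref{maintheo} to $X^{(1)}$ with the same biasing function produces the absolutely continuous $\hat X_a$ satisfying
\[
  \alpha_2\,E\bigl[h'(\hat X_a)\bigr]=E\bigl[\sign(X^{(1)}-a)\bigl(h(X^{(1)})-h(a)\bigr)\bigr]
\]
for Lipschitz $h$. Setting $h:=f'$ converts the right-hand side into $E[k'(X^{(1)})]$ for $k(x):=\sign(x-a)\bigl(f(x)-f(a)-f'(a)(x-a)\bigr)$, which lies in $\F^2$, satisfies $k(a)=0$ and $k'(x)=\sign(x-a)(f'(x)-f'(a))$, and fulfils $\sign(X-a)k(X)=f(X)-f(a)-f'(a)(X-a)$ almost surely. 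Feeding this $k$ into the extended $(\ast)$ produces $\alpha_1 E[k'(X^{(1)})]=E[f(X)-f(a)-f'(a)(X-a)]$; combining with $\alpha_2 E[f''(\hat X_a)]=E[k'(X^{(1)})]$ yields
\[
  \alpha_1\alpha_2\,E\bigl[f''(\hat X_a)\bigr]=E\bigl[f(X)-f(a)-f'(a)(X-a)\bigr],
\]
which is exactly \eqref{Xhat} because $\alpha_1\alpha_2=E[(X-a)^2]/2$. Uniqueness follows by the standard $f=I^2 h$, $h\in C_c(\R)$, argument from the uniqueness proof of Theorem \ref{maintheo}; absolute continuity is immediate from the second application of that theorem. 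The only non-routine ingredient, and the one I expect to be the main technical obstacle, is the extension of $(\ast)$ from $\F^1$ to $\F^2$, which is dispatched directly from the explicit density of $X^{(1)}$ together with the finite second-moment assumption on $X$.
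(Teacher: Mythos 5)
Your proof is correct and takes essentially the same route as the paper's: two successive applications of Theorem \ref{maintheo} with $B(x)=\sign(x-a)$, an extension of the first biasing identity to test functions whose derivative grows linearly (justified by $E[(X-a)^2]<\infty$), computation of the intermediate constant via $k'(x)=\abs{x-a}$, and the choice $h=f'$ in the second identity. The only cosmetic difference is that you fold the full first-order Taylor polynomial at $a$ into the auxiliary function $k$, whereas the paper uses $g(x)=\sign(x-a)(f(x)-f(a))$ and handles the resulting $f'(a)E[X-a]$ term separately.
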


\begin{remark}\label{mainproprem}
Using the transformation from Proposition \ref{mainprop}, one could easily generalize the results from \cite{PiRen} to random sums with general mean zero summands and even to summands with small, non-zero means. 
\end{remark}

\begin{proof}[Proof of Proposition \ref{mainprop}]
Uniqueness can be seen in a similar way as in the proof of Theorem \ref{maintheo}.
The existence proof is very similar to the proof of Theorem 3.4 in the first arXiv version of \cite{PiRen}: Let $X$ and $f$ be as in the statement of Proposition \ref{mainprop}. Define the function $B$ on $\R$ by 
\begin{equation}\label{sign}
 B(x):=\sign(x-a):=\begin{cases}
                  -1,& x<a\\
                   0,& x=a\\
                   1,& x>a
                 \end{cases}
\end{equation}
having exactly one sign change at $x_1=a$. Thus, since\\ $\alpha:=E[B(X)(X-a)]=E\abs{X-a}\in(0,\infty)$, by Theorem \ref{maintheo}, there exists a random variable $\tilde{X}$ such that 
\begin{equation}\label{mpeq1}
 E\abs{X-a}E\Bigl[g'\bigl(\tilde{X}\bigr)\Bigr]=E\Bigl[\sign(X-a)\bigl(g(X)-g(a)\bigr)\Bigr]
\end{equation}
holds for all Lipschitz functions $g$ on $\R$.   
Now, since for all $x\not=a$
\[\frac{d}{dx}\biggl(\frac{(x-a)^2}{2}\sign(x-a)\biggr)=\abs{x-a}\,,\]
we have 
\begin{align}\label{mpeq2}
 \beta&:=E\Bigl[(\tilde{X}-a)\sign\bigl(\tilde{X}-a\bigr)\Bigr]=E\abs{\tilde{X}-a}=\frac{1}{\alpha}E\Bigl[\sign(X-a)\frac{1}{2}(X-a)^2\sign(X-a)\Bigr]\notag\\
 &=\frac{1}{2\alpha}E\bigl[(X-a)^2\bigr]\in(0,\infty)\,.
\end{align}
Thus, again by Theorem \ref{maintheo}, there exists a random variable $\Xhat_a$ having the $\tilde{X}-B$ biased distribution. This means that 
\begin{align}\label{mpeq3}
 \beta E\bigl[h'(\Xhat_a)\bigr]&= E\Bigl[\sign(\tilde{X}-a)\bigl(h(\tilde{X})-h(a)\bigr)\Bigr]\notag\\
 &=E\Bigl[\sign(\tilde{X}-a)h(\tilde{X})\Bigr]-h(a) E\bigl[\sign(\tilde{X}-a)\bigr]\notag\\
&=E\Bigl[\sign(\tilde{X}-a)h(\tilde{X})\Bigr] -h(a)\frac{1}{\alpha} E[X-a]
\end{align}
holds for all Lipschitz functions $h$.
Since $X$ has finite second moment, one can easily see that \eqref{mpeq1} also holds for absolutely continuous functions $g$ such that $\abs{g'(x)}$ is $O(x)$ as $\abs{x}\to\infty$. In particular this holds for 
$g(x):=\sign(x-a)\bigl(f(x)-f(a)\bigr)$ with $g(a)=0$ and $g'(x)=\sign(x-a)f'(x)$ for $x\not=a$. 
Thus, from \eqref{mpeq1}, \eqref{mpeq2} and \eqref{mpeq3} we conclude that 
\begin{align}\label{mpeq4}
 E\bigl[f(X)-f(a)\bigr]&=E\bigl[\sign(X-a)g(X)\bigr]=\alpha E\bigl[g'(\tilde{X})\bigr]=\alpha E\bigl[\sign(\tilde{X}-a)f'(\tilde{X})\bigr]\notag\\
&=\alpha\beta E\bigl[f''(\Xhat_a)\bigr]+f'(a)E[X-a]\notag\\
&=\frac{1}{2}E\bigl[(X-a)^2\bigr]E\bigl[f''(\Xhat_a)\bigr]+f'(a)E[X-a]\,,
\end{align}
proving \eqref{Xhat}. Absolute continuity of $\mathcal{L}(\Xhat_a)$ follows immediately from Theorem \ref{maintheo}.
\end{proof}

Next, we will use the result of Proposition \ref{mainprop} to give a generalization of Theorem \ref{maintheo} to cases, where the number $k$ of sign changes of $B$ may be smaller than the order $m$ of the derivative we would like to have in the defining identity for the biased distribution.
However, we will have to assume that $k\equiv m \mod 2$, i.e. that $k$ and $m$ have the same parity. 
In what follows, for nonnegative integers $n,j$ we denote by $(n)_j$ the falling factorial, i.e. $(n)_0:=1$ and $(n)_j:=n(n-1)\cdot\ldots\cdot(n-j+1)$ if $j\geq1$.

\begin{theorem}\label{genmaintheo}
 Let $k\leq m$ be nonnegative integers with the same parity and let $B$ be a measurable function on $\R$ having $k$ sign changes at the points $x_1<x_2<\ldots<x_k$ such that $B(x)\geq0$ for all $x\geq x_k$, if $k\geq1$ and for all $x$ in $\R$, if $k=0$. Further, let $X$ be a real-valued random variable such that 
$E\bigl[\abs{B(X) X^j}\bigr]<\infty$ for all $0\leq j\leq m$ and such that 
\begin{equation}\label{alpha2}
 \alpha:= \frac{1}{k!}E\Bigl[B(X)\prod_{j=1}^k\bigl(X-x_j\bigr)\Bigr]\not=0\,.
\end{equation}
If $k=0$, assume further that the generalized $X-B$ biased distribution from Theorem \ref{maintheo} is not the Dirac measure at $0$. 
Then, there exists a unique distribution for a random variable $X^{(B,m)}$ such that 
\begin{equation}\label{mainid2}
 E\Bigl[B(X)\bigl(F(X)-R_F(X)-L_F(X)\bigr)\Bigr]=\beta E\Bigl[F^{(m)}\bigl(\XBm\bigr)\Bigr]
\end{equation}
holds for each $F\in\F^m$, where, with 
\begin{equation}\label{aij}
 a_i^{(j)}:=\sum_{l=1}^k \frac{x_l^{k+j-i-1}}{\prod_{r\not=l}(x_l-x_r)}=\sum_{\substack{(\alpha_1,\dotsc,\alpha_k)\in\N_0^k\\ \sum_{j=1}^k \alpha_j= j-i}} x_1^{\alpha_1}x_2^{\alpha_2}\cdot\ldots\cdot x_k^{\alpha_k}\quad(j\geq i\geq0)\,,  
\end{equation}
we define the polynomial $R_F$ by 
\begin{equation}\label{Rf}
 R_F(x):=R_{F;x_1,\dotsc,x_k}(x):=\prod_{j=1}^k(x-x_j)\sum_{i=0}^{m-k-1}\left(\sum_{j=i}^{m-k-1} \frac{F^{(k+j)}(0)a_i^{(j)}}{(k+j)!}\right) x^i\,,
\end{equation}
if $k\geq1$ and by 
\begin{equation}\label{RF0}
 R_F(x):=\sum_{j=0}^{m-1} \frac{F^{(j)}(0)}{j!}x^j\,,
\end{equation}
if $k=0$. Then, $R_F$ is equal to zero, whenever $k=m$ and has degree at most $m-1$, if $k<m$. Furthermore, $L_F$ still denotes the interpolation polynomial for $F$ corresponding to the nodes $x_1,\ldots,x_k$ given by \eqref{LF} but with $m$ replaced by $k$. Additionally, $\beta$ is always positive and is given by 
\begin{equation}\label{beta}
 \beta:=\frac{1}{m!}E\Bigl[B(X)\Bigl(X^m-\sum_{l=1}^k x_l^m\prod_{r\not=l}\frac{X-x_r}{x_l-x_r}\Bigr)\Bigr]
\end{equation}
if $k\geq1$ and by $\beta=(m!)^{-1}E[B(X)X^m]$, if $k=0$. Also, the distribution of $\XBm$ is always absolutely continuous with respect to the Lebesgue measure unless $k=m=0$. 
\end{theorem}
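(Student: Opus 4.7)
The plan is to construct $\XBm$ by iterating two tools already at hand: Theorem \ref{maintheo} (applied with $k$ sign changes, producing a ``$k$-th derivative'' bias), and Proposition \ref{mainprop} (applied at $a=0$, raising the order of the derivative by two at each step). Since $m-k$ is even by parity, I write $2\ell:=m-k$. First I would apply Theorem \ref{maintheo} to the pair $(X,B)$ to obtain $Y_0:=\XB$ satisfying $\alpha E[G^{(k)}(Y_0)]=E[B(X)(G(X)-L_G(X))]$ for every sufficiently smooth and polynomially bounded $G$ (cf.\ Remark \ref{remmt} (e)). Next I would recursively define $Y_{j+1}$ to be the random variable given by Proposition \ref{mainprop} applied to $Y_j$ at $a=0$, for $j=0,\dotsc,\ell-1$, and set $\XBm:=Y_\ell$. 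Two conditions have to be tracked inductively: that $E[Y_j^2]<\infty$, which is ensured by testing the previous transformation against monomials of suitable degree and invoking $E\abs{B(X)X^m}<\infty$; and that $P(Y_j=0)<1$, which holds for $j\geq 1$ by the absolute-continuity part of Proposition \ref{mainprop}, for $j=0$ and $k\geq 1$ by that of Theorem \ref{maintheo}, and for $j=0$ and $k=0$ by the standing hypothesis that $\XB$ is not the Dirac measure at $0$.

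Next I would chain the identities. Applying Proposition \ref{mainprop} at $a=0$ to $G^{(k+2j)}$ for $j=0,\dotsc,\ell-1$ and composing yields the iterated relation
\begin{equation*}
\alpha E\bigl[G^{(k)}(Y_0)\bigr]=\alpha\sum_{j=0}^{\ell-1}\biggl(\prod_{i=0}^{j-1}\frac{E[Y_i^2]}{2}\biggr)\Bigl(G^{(k+2j)}(0)+G^{(k+2j+1)}(0)E[Y_j]\Bigr)+\beta E\bigl[G^{(m)}(Y_\ell)\bigr],
\end{equation*}
with $\beta:=\alpha\prod_{i=0}^{\ell-1}E[Y_i^2]/2>0$. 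Substituting $G:=F-R_F$ and using both $L_{R_F}=0$ (since $R_F$ vanishes at each $x_j$) and $(F-R_F)^{(m)}=F^{(m)}$ (since $\deg R_F\leq m-1$), I would combine the above with the Theorem \ref{maintheo} identity for $Y_0$ to reduce the desired \eqref{mainid2} to the purely algebraic assertion that, with the prescribed $R_F$, one has $R_F^{(k+s)}(0)=F^{(k+s)}(0)$ for every $s=0,1,\dotsc,m-k-1$; this makes every summand in the polynomial correction vanish.

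The last step, and the main obstacle I expect to encounter, is to verify that the explicit formula \eqref{Rf} (respectively \eqref{RF0} when $k=0$) realizes the unique polynomial of degree at most $m-1$ vanishing at $x_1,\dotsc,x_k$ whose derivatives of orders $k,k+1,\dotsc,m-1$ at $0$ agree with those of $F$. This is a direct but finicky computation: writing $R_F(x)=\prod_{j=1}^k(x-x_j)\,Q(x)$ with $\deg Q\leq m-k-1$, Leibniz' rule reduces the derivative conditions to an upper-triangular linear system for the coefficients of $Q$, whose solution is given in closed form via the complete homogeneous symmetric polynomials $h_{j-i}(x_1,\dotsc,x_k)$ that, by the second equality in \eqref{aij}, coincide with the numbers $a_i^{(j)}$. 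As a sanity check, applying \eqref{mainid2} to $F(x)=x^m/m!$ (for which $R_F\equiv 0$ and $F^{(m)}\equiv 1$) should recover the stated closed form \eqref{beta} for $\beta$. Uniqueness of $\calL(\XBm)$ then follows exactly as in Theorem \ref{maintheo} by testing \eqref{mainid2} against $F=I^m f$ with $f\in C_c(\R)$; positivity of $\beta$ was established above; and absolute continuity of $\calL(\XBm)$ follows from Proposition \ref{mainprop} when $\ell\geq 1$ and from Theorem \ref{maintheo} when $\ell=0$ and $k=m\geq 1$, while for $k=m=0$ it may genuinely fail, as noted in the statement.
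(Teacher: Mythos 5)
Your construction is exactly the paper's: apply Theorem \ref{maintheo} once to get $Y_0$ with the generalized $X-B$ biased distribution, then iterate Proposition \ref{mainprop} at $a=0$ precisely $\frac{m-k}{2}$ times (using the parity hypothesis), set $\XBm:=Y_{(m-k)/2}$, track $0<E[Y_j^2]<\infty$ via the moment assumptions and absolute continuity (with the extra hypothesis covering $k=0$), get positivity of $\beta$ from the evenness of $m-k$, and settle uniqueness and absolute continuity as in Theorem \ref{maintheo}. Where you genuinely diverge is the endgame: the paper computes the moments $E[Y_0^j]$, $j\le m-k$, by testing the $Y_0$-identity with the monomials $x^{k+j}/(k+j)_k$, converts them back into $X$-expectations, and assembles $R_F$ and $\beta$ through the explicit division identity \eqref{mt2eq8} (whence also \eqref{byproduct} and \eqref{mt2eq12}); you instead substitute $G=F-R_F$ into the chained identity, use $L_{R_F}=0$ and $\deg R_F\le m-1$, and reduce everything to the Hermite-type characterization $R_F^{(k+s)}(0)=F^{(k+s)}(0)$, $s=0,\dotsc,m-k-1$, recovering the closed form \eqref{beta} for free by testing with $F(x)=x^m/m!$. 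That is a clean and arguably slicker bookkeeping, and it buys you the interpretation of $R_F$ as the unique polynomial of degree $\le m-1$ vanishing at the nodes with prescribed derivatives at $0$, plus an effortless identification of $\beta$; but be aware that the ``finicky computation'' you defer is exactly the algebra the paper cannot avoid either: verifying that \eqref{Rf} solves your triangular system amounts to the Euclidean-division identity \eqref{mt2eq8} (quotient coefficients given by complete homogeneous symmetric polynomials), and since the second equality in \eqref{aij} is part of the statement you must still prove it separately, which is the content of Lemma \ref{aijlemma}. One further point applies equally to you and to the paper: both the $Y_0$-identity and Proposition \ref{mainprop} are stated for test functions with bounded top derivative, so applying them to $F-R_F$ and its derivatives (or to monomials) tacitly uses the extension to polynomially growing derivatives justified by Remark \ref{remmt}~(e) and the moment assumptions; you should make that extension explicit, but it is not a gap relative to the paper's own level of detail.
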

  
\begin{proof}
 From Theorem \ref{maintheo} we know that $\alpha>0$. Let $F\in\F^m$ be given. By the assumptions on $X$ one can conclude again from Theorem \ref{maintheo} that $E[B(X)(F(X)-L_F(X))]$ exists and that there is a random variable $Y$ having the generalized $X-B$ biased distribution, so that 
\begin{equation}\label{mt2eq1}
 E\Bigl[B(X)\bigl(F(X)-L_F(X)\bigr)\Bigr]=\alpha E\Bigl[F^{(k)}\bigl(Y\bigr)\Bigr]\,.
\end{equation}
From our assumption in the case $k=0$ and from Theorem \ref{maintheo} for $k\geq1$, we know that $Y$ is not almost surely equal to zero. Thus, if $m\geq k+2$, by Proposition \ref{mainprop} (with $a=0$) 
we know that there is a random variable $Y_1$ satisfying 
\begin{align}\label{mt2eq2}
 E\bigl[F^{(k)}(Y)\bigr]&=F^{(k)}(0)+F^{(k+1)}(0)E[Y]\notag\\
&\;+E\bigl[F^{(k)}(Y)-F^{(k)}(0)-F^{(k+1)}(0)Y\bigr]\notag\\
&=F^{(k)}(0)+F^{(k+1)}(0)E[Y] +\beta_1 E\bigl[F^{(k+2)}(Y_1)\bigr]\,,
\end{align}
where $\beta_1=\frac{1}{2}E[Y^2]$. Now, if $m\geq k+4$, then again by Proposition \ref{mainprop} we can find a random variable $Y_2$ such that 
\begin{align}\label{mt2eq3}
 E\bigl[F^{(k+2)}(Y_1)\bigr]&=F^{(k+2)}(0)+F^{(k+3)}(0)E[Y_1]\notag\\
&\;+E\bigl[F^{(k+2)}(Y_1)-F^{(k+2)}(0)-F^{(k+3)}(0)Y_1\bigr]\notag\\
&=F^{(k+2)}(0)+\frac{F^{(k+3)}(0)}{3! \beta_1}E[Y^3]+\beta_2 E\bigl[F^{(k+4)}(Y_2)\bigr]\,,
\end{align}
since $E[Y_1]=\frac{1}{6\beta_1}E[Y^3]=\frac{1}{3!\beta_1}E[Y^3]$ and with 
\[\beta_2=\frac{1}{2}E[Y_1^2]=\frac{1}{2}\frac{1}{12\beta_1}E[Y^4]=\frac{1}{4!\beta_1}E[Y^4]\,.\]
Rearranging \eqref{mt2eq2} and \eqref{mt2eq3} we find that 
\begin{align}\label{mt2eq4}
 E\bigl[F^{(k)}(Y)\bigr]&=F^{(k)}(0)+F^{(k+1)}(0)E[Y]+\frac{F^{(k+2)}(0)}{2}E[Y^2]+\frac{F^{(k+3)}(0)}{3!}E[Y^3]\notag\\
&\;+\frac{1}{4!}E[Y^4] E\bigl[F^{(k+4)}(Y_2)\bigr]\,.
\end{align}
Inductively, for $l=1,\dotsc,\frac{m-k}{2}$ we find that there exists $Y_{l}$ such that, with $Y_0:=Y$ we have 
\begin{align}\label{mt2eq5}
 E\bigl[F^{(k+2l-2)}(Y_{l-1})\bigr]&=F^{(k+2l-2)}(0)+F^{(k+2l-1)}(0)E[Y_{l-1}]\notag\\
&\;+E\bigl[F^{(k+2l-2)}(Y_{l-1})-F^{(k+2l-2)}(0)-F^{(k+2l-1)}(0)Y_{l-1}\bigr]\notag\\
&=F^{(k+2l-2)}(0)+\frac{F^{(k+2l-1)}(0)}{(2l-1)! \beta_{l-1}}E[Y^{2l-1}]+\beta_{l} E\bigl[F^{(k+2l)}(Y_l)\bigr]\,,
\end{align}
where 
\[\beta_l=\frac{1}{(2l)!\beta_1\cdot\ldots\cdot\beta_{l-1}}E\bigl[Y^{2l}\bigr]\,.\]
Again by induction we find the following analog of \eqref{mt2eq4}:
\begin{align}\label{mt2eq6}
 E\bigl[F^{(k)}(Y)\bigr]=\sum_{j=0}^{m-k-1}\frac{F^{(k+j)}(0)}{j!}E\bigl[Y^j\bigr]+\frac{1}{(m-k)!}E\bigl[Y^{m-k}\bigr]E\bigl[F^{(m)}\bigl(Y_{\frac{m-k}{2}}\bigr)\bigr]
\end{align}
Now note that for $j=0,1,\dotsc,m-k$ with the function $F_j(x):=\frac{x^{k+j}}{(k+j)_k}$ we have from \eqref{mt2eq1} that 
\begin{equation}\label{mt2eq7}
E\bigl[Y^j\bigr]=E\bigl[F_j^{(k)}(Y)\bigr]=\frac{1}{\alpha}E\Bigl[B(X)\bigl(F_j(X)-L_{F_j}(X)\bigr)\Bigr]\,.
\end{equation}
Clearly, $Q_j(x):=F_j(x)-L_{F_j}(x)$ is a polynomial of degree $k+j$ having the zeroes $x_1<\ldots<x_k$. Thus, there exists a polynomial $q_j$ of degree $j$ such that $Q_j(x)=q_j(x)\prod_{l=1}^k(x-x_l)$.
Now, first suppose that $k=0$. Then, we have $F_j(x)=Q_j(x)=q_j(x)=x^j$. Thus, from \eqref{mt2eq6} and \eqref{mt2eq7} we can conclude that 
\begin{align}\label{mt2eqa}
 \alpha E\bigl[F(Y)\bigr]&=\sum_{j=0}^{m-1}\frac{F^{(j)}(0)}{j!} E\bigl[B(X)X^j\bigr]\notag\\
&\; +\frac{1}{m!} E\bigl[B(X)X^m\bigr]E\bigl[F^{(m)}\bigl(Y_{\frac{m}{2}}\bigr)\bigr]\,.
\end{align}
Letting $\XBm:=Y_{\frac{m}{2}}$ the claim follows in the case $k=0$ from \eqref{mt2eq1} and \eqref{mt2eqa}. From now on, we will assume that $k\geq1$.
In order to find $q_j$ in this case, we write 
\begin{align}\label{mt2eq8}
 x^{k+j}-L_{x^{k+j}}&=x^{k+j}-\sum_{l=1}^kx_l^{k+j}\prod_{r\not=l}\frac{x-x_r}{x_l-x_r}
=\sum_{l=1}^k\bigl(x^{k+j}-x_l^{k+j}\bigr)\prod_{r\not=l}\frac{x-x_r}{x_l-x_r}\notag\\
&=\sum_{l=1}^k(x-x_l)\sum_{i=0}^{j+k-1}x^ix_l^{j+k-1-i}\prod_{r\not=l}\frac{x-x_r}{x_l-x_r}\notag\\
&=\prod_{r=1}^k(x-x_r)\sum_{i=0}^{j+k-1}x^i\sum_{l=1}^k\frac{x_l^{k+j-1-i}}{\prod_{r\not=l}(x_l-x_r)}\notag\\
&=\prod_{r=1}^k(x-x_r)\sum_{i=0}^{j}x^i\sum_{l=1}^k\frac{x_l^{k+j-1-i}}{\prod_{r\not=l}(x_l-x_r)}\,,
\end{align}
the last identity because the left hand side is a polynomial of degree $j+k$ and, hence, the right hand side must also be. Thus, as a neat by-product we have proved that 
\begin{equation}\label{byproduct}
 \sum_{l=1}^k\frac{x_l^n}{\prod_{r\not=l}(x_l-x_r)}=0\quad\text{ for all }n\leq k-2\,.
\end{equation}
From \eqref{mt2eq8} we conclude that $q_j$ is given by 
\begin{equation}\label{mt2eq9}
 q_j(x)=\frac{1}{(k+j)_k}\sum_{i=0}^{j}\left(\sum_{l=1}^k\frac{x_l^{k+j-1-i}}{\prod_{r\not=l}(x_l-x_r)}\right)x^i=\frac{1}{(k+j)_k}\sum_{i=0}^{j} a_i^{(j)}x^i\,.
\end{equation}
Hence, from \eqref{mt2eq7} and \eqref{mt2eq9} we find for $j=0,1,\dotsc,m-k$ that 
\begin{equation}\label{mt2eq10}
 E\bigl[Y^j\bigr]=\frac{1}{\alpha(k+j)_k}\sum_{i=0}^{j} a_i^{(j)}E\Bigl[B(X)X^i\prod_{l=1}^k(X-x_l)\Bigr]
\end{equation}
Plugging this into \eqref{mt2eq6} we arrive at 
\begin{align}\label{mt2eq11}
 E\bigl[F^{(k)}(Y)\bigr]&=\sum_{j=0}^{m-k-1}\frac{F^{(k+j)}(0)}{j!}\frac{1}{\alpha(k+j)_k}\sum_{i=0}^{j} a_i^{(j)}E\Bigl[B(X)X^i\prod_{l=1}^k(X-x_l)\Bigr] \notag\\
&\; +\frac{1}{\alpha(m-k)!(m)_k}\sum_{i=0}^{m-k} a_i^{(m-k)}E\Bigl[B(X)X^i\prod_{l=1}^k(X-x_l)\Bigr] E\bigl[F^{(m)}\bigl(Y_{\frac{m-k}{2}}\bigr)\bigr]\notag\\
&=\frac{1}{\alpha}\sum_{i=0}^{m-k-1} E\Bigl[B(X)X^i\prod_{l=1}^k(X-x_l)\Bigr]\sum_{j=i}^{m-k-1}\frac{F^{(k+j)}(0)a_i^{(j)}}{(k+j)!}\notag\\
&\;+\frac{1}{\alpha m!}\sum_{i=0}^{m-k} a_i^{(m-k)}E\Bigl[B(X)X^i\prod_{l=1}^k(X-x_l)\Bigr] E\bigl[F^{(m)}\bigl(Y_{\frac{m-k}{2}}\bigr)\bigr]\notag\\
&=\frac{1}{\alpha}E\bigl[B(X)R_F(X)\bigr]\notag\\
&\;+\frac{1}{\alpha m!}\sum_{i=0}^{m-k} a_i^{(m-k)}E\Bigl[B(X)X^i\prod_{l=1}^k(X-x_l)\Bigr] E\bigl[F^{(m)}\bigl(Y_{\frac{m-k}{2}}\bigr)\bigr]
\end{align}
Now, from reading \eqref{mt2eq8} backwards (with $m=k+j$) we obtain  
\begin{align}\label{mt2eq12}
 \sum_{i=0}^{m-k}a_i^{(m-k)}\prod_{j=1}^k(x-x_j)x^i&=\sum_{i=0}^{m-k}\sum_{l=1}^k\frac{x_l^{m-1-i}}{\prod_{r\not=l}(x_l-x_r)}\prod_{j=1}^k(x-x_j)x^i\notag\\
&=x^m-L_{x^m}=x^m-\sum_{l=1}^k x_l^m\prod_{r\not=l}\frac{x-x_r}{x_l-x_r}\,.
\end{align}
Thus, from \eqref{mt2eq11} and \eqref{mt2eq12} we see that 
\begin{equation}\label{mt2eq13}
  E\bigl[F^{(k)}(Y)\bigr]=\frac{1}{\alpha}E\bigl[B(X)R_F(X)\bigr]+\frac{\beta}{\alpha}E\bigl[F^{(m)}\bigl(Y_{\frac{m-k}{2}}\bigr)\bigr]\,.
\end{equation}
Letting  $\XBm:=Y_{\frac{m-k}{2}}$ \eqref{mainid2} now follows from \eqref{mt2eq1} and \eqref{mt2eq13}.\\ 
To see that $\beta>0$, note that we know from our assumption in the case $k=0$ and from Theorem \ref{maintheo}  in the case $k\geq1$ that $Y$ cannot almost surely be equal to zero. Thus, the even moments of $Y$ are also non-zero.
Since we know from \eqref{mt2eq6} that $\beta=\frac{\alpha}{(m-k)!}E[Y^{m-k}]$ with $\alpha>0$ and as $m-k$ is even, it follows that also $\beta>0$. Knowing that $\beta$ is necessarily positive, uniqueness of the distribution for $\XBm$ can be proved as for $\XB$ in the proof of Theorem \ref{maintheo}. Absolute continuity of 
$\mathcal{L}(\XBm)$ in the case that not both, $m$ and $k$ are equal to zero, now follows from Theorem \ref{maintheo} and Proposition \ref{mainprop}. It remains to show the alternative representation for the numbers $a_i^{(j)}$ in \eqref{aij}. This is given by Lemma \ref{aijlemma}.
\end{proof}

\begin{lemma}\label{aijlemma}
For $k\geq1$ let $x_1,\dotsc,x_k$ be distinct real (or complex) numbers. Then, for each nonnegative integer $n$ we have the identity 
\begin{equation*}
 \sum_{l=1}^k\frac{x_l^n}{\prod_{r\not=l}(x_l-x_r)}=\sum_{\substack{(\alpha_1,\dotsc,\alpha_k)\in\N_0^{k}\\ \sum_{j=1}^k \alpha_j= n-k+1}} x_1^{\alpha_1}x_2^{\alpha_2}\cdot\ldots\cdot x_k^{\alpha_k}\,.
\end{equation*}
\end{lemma}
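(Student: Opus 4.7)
The right-hand side is the complete homogeneous symmetric polynomial $h_{n-k+1}(x_1,\dotsc,x_k)$ (with the convention that this empty sum equals $0$ when $n<k-1$), while the left-hand side is the divided difference $[x_1,\dotsc,x_k]f$ of the monomial $f(x)=x^n$. The plan is to prove the identity uniformly in $n$ by a partial fraction / generating function computation.

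Consider the rational function $\phi(t):=\prod_{l=1}^k(1-x_lt)^{-1}$, regarded as a formal power series in $t$ around $t=0$. Expanding each factor as a geometric series and collecting terms in a standard way yields the classical generating function identity
\[
\phi(t)=\sum_{m=0}^{\infty}h_m(x_1,\dotsc,x_k)\,t^m .
\]
On the other hand, since the $x_l$ are distinct, partial fractions gives
\[
\phi(t)=\sum_{l=1}^k\frac{c_l}{1-x_lt}, \qquad c_l=\frac{x_l^{k-1}}{\prod_{r\neq l}(x_l-x_r)},
\]
where each $c_l$ is read off by multiplying through by $(1-x_lt)$ and passing to the limit $t\to 1/x_l$ (under the temporary assumption that all $x_l\neq 0$; the identity to be proved is polynomial in $x_1,\dotsc,x_k$, so the case where some $x_l$ vanish follows by continuity). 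Expanding each summand $c_l/(1-x_lt)$ as a geometric series and comparing coefficients of $t^m$ in the two expressions for $\phi(t)$ produces
\[
h_m(x_1,\dotsc,x_k)=\sum_{l=1}^k\frac{x_l^{m+k-1}}{\prod_{r\neq l}(x_l-x_r)} .
\]
Setting $m:=n-k+1$ and rearranging now yields the lemma, with the convention $h_m=0$ for $m<0$ handling the boundary range $n\leq k-2$ (which is consistent with \eqref{byproduct} proved in the text).

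The main obstacle is purely cosmetic: one must ensure that the edge case $n<k-1$ is treated uniformly, since there the right-hand side is an empty sum (hence $0$), matching the vanishing on the left by cancellation. An alternative but longer route would be a double induction on $k$ and $n$, exploiting the common three-term recursion satisfied by both sides (on the symmetric function side one has $h_m(x_1,\dotsc,x_k)=x_k\,h_{m-1}(x_1,\dotsc,x_k)+h_m(x_1,\dotsc,x_{k-1})$, which can be matched by the analogous divided difference recursion on the left); but the generating function argument above avoids all such bookkeeping and works transparently for every $n\geq 0$ at once.
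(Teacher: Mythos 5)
Your proof is correct, but it takes a genuinely different route from the paper's. The paper proves the lemma by induction on $k$, simultaneously for all $n\geq0$: for $k+1$ points it splits the sum into $S_1+S_2$ via the factorization $x_l^n-x_{k+1}^n=(x_l-x_{k+1})\sum_{i=0}^{n-1}x_l^i x_{k+1}^{n-1-i}$, evaluates $S_1$ by the induction hypothesis, and shows $S_2=0$ by appealing to \eqref{byproduct}; this is essentially the recursion-based alternative you sketch and set aside in your last sentence. Your argument instead identifies the two sides as classical objects -- the complete homogeneous symmetric polynomial $h_{n-k+1}$ and the divided difference of $x^n$ -- and deduces the identity from the partial fraction expansion of $\prod_{l=1}^k(1-x_lt)^{-1}$, which is shorter and makes the combinatorial content transparent, at the price of importing the generating-function formalism. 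Two points should be tightened. First, the coefficient comparison only proves the identity for $n\geq k-1$; for $n\leq k-2$ the left-hand side is not the coefficient of any power of $t$ in your series, so that range genuinely rests on \eqref{byproduct}. You do cite it, and this is a legitimate dependency: \eqref{byproduct} is established in the proof of Theorem \ref{genmaintheo} by a degree-counting argument independent of the lemma, and the paper's own induction leans on it in the same way (for the step $S_2=0$). Second, the identity is not polynomial in $x_1,\dotsc,x_k$ as written, since the left-hand side carries denominators; the correct way to drop the temporary assumption $x_l\neq0$ is to observe that both sides are rational functions whose denominators involve only the differences $x_l-x_r$, hence are continuous on the open set of distinct tuples, so equality on the dense subset where in addition all $x_l\neq0$ extends by continuity (equivalently, clear denominators by the Vandermonde product and use that a polynomial identity valid on a nonempty open set holds identically). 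With these adjustments your proof is complete.
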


\begin{proof}
We prove the claim by induction on $k$, simultaneously for all $n\geq0$. If $k=1$, then it is clearly true. Now assume that $k\geq1$ and that $x_1,\dotsc,x_k,x_{k+1}$ are distinct numbers. Then, we can write
\begin{align*}\label{aij1}
 \sum_{l=1}^{k+1}\frac{x_l^n}{\prod_{r\not=l}(x_l-x_r)}&=\sum_{l=1}^{k}\frac{x_l^n-x_{k+1}^n}{\prod_{r\not=l}(x_l-x_r)}
+x_{k+1}^n\sum_{l=1}^{k+1}\frac{1}{\prod_{r\not=l}(x_l-x_r)}\notag\\
&=:S_1+S_2\,.
\end{align*}
Noting that 
\begin{equation*}
x_l^n-x_{k+1}^n = \bigl(x_l-x_{k+1}\bigr)\sum_{i=0}^{n-1}x_l^ix_{k+1}^{n-1-i}\,,
\end{equation*}
we conclude from the induction hypothesis that 
\begin{align*}\label{aij2}
S_1&=\sum _{l=1}^{k}\frac{\bigl(x_l-x_{k+1}\bigr)}{\prod_{\substack{r=1\\ r\not=l}}^{k+1}(x_l-x_r)}\sum_{i=0}^{n-1}x_l^ix_{k+1}^{n-1-i}
=\sum_{i=0}^{n-1} x_{k+1}^{n-1-i}\sum _{l=1}^{k}\frac{x_l^i}{\prod_{\substack{r=1\\ r\not=l}}^{k}(x_l-x_r)}\notag\\
&=\sum_{i=0}^{n-1} x_{k+1}^{n-1-i}\sum_{\substack{(\beta_1,\dotsc,\beta_k)\in\N_0^{k}\\ \sum_{j=1}^k \beta_j= i-k+1}} x_1^{\beta_1}x_2^{\beta_2}\cdot\ldots\cdot x_k^{\beta_k}\\
&=\sum_{\substack{(\beta_1,\dotsc,\beta_k,\beta_{k+1})\in\N_0^{k+1}\\ \sum_{j=1}^k \beta_j= n-(k+1)+1}} x_1^{\beta_1}x_2^{\beta_2}\cdot\ldots\cdot x_k^{\beta_k}x_{k+1}^{\beta_{k+1}}\,.
\end{align*}
Thus, it only remains to show that $S_2=0$. But this follows from \eqref{byproduct}, completing the proof.  
\end{proof}

\begin{remark}\label{grem}
\begin{enumerate}[(a)]
\item We may call the distribution of $X^{(B,m)}$ the \textit{$X-(B,m)$ biased distribution}. Note, however, that, as for $\XB$, the distribution of $X^{(B,m)}$ is sensitive to the number $k$ and the choice of the sign change points\\ $x_1<\ldots<x_k$, if these are ambiguous (see Remark \ref{remmt} (b)).
\item It is easy to see that an analog of Proposition \ref{mix} also exists for the $X-(B,m)$ biased distribution. 
\item As in Proposition \ref{mainprop}, we could introduce $\frac{m-k}{2}$ additional location parameters $a_j\in\R$ in the statement of Theorem \ref{genmaintheo}. This can be seen from the proof, which invokes 
Proposition \ref{mainprop} exactly $\frac{m-k}{2}$ times with $a=0$. We have, however, decided to refrain from this in order to keep the result more readable and, because it is not clear, which would be the most useful choice of the $a_j$ for typical applications (see Theorem \ref{highorder}). It should be clear, however, how the proof and the statement would have to be modified, if one wanted to introduce such extra parameters. 
\item One can see from examples that the condition that $k$ and $m$ have the same parity cannot be abandoned without substitution. In fact, if $X$ has support equal to $\R$, then one cannot find a random variable $X^e$ such that \eqref{renew} is satisfied for all Lipschitz $f$, because it is easy to see that 
the corresponding distribution would need to have a density proportional to $q(t):=1_{[0,\infty)}(t)P(X>t)-1_{(-\infty,0)}(t)P(X\leq t)$, which is negative for $t<0$. Note that contrarily, if there is an $x_1\in\R$ such that $X\geq x_1$ almost surely (and $E[X]>x_1$), then letting $B(x):=\sign(x-x_1)$ having one sign change at $x_1$, by Theorem \ref{maintheo} 
we find a random variable $X^{(B)}$ such that $E[f(X)-f(x_1)]=\alpha E[f'(X^{(B)})]$ with $\alpha=E[X-x_1]$. 
\item In view of (d) it would be nice to know, if, for each real random variable $X$ with $E\abs{X}<\infty$, we can find another random variable $Y$ and constants $\beta>0$ and $c_f$, $f$ Lipschitz on $\R$, such that 
\begin{equation}\label{cp1}
E\bigl[f(X)-c_f\bigr]=\beta E\bigl[f'(Y)\bigr] 
\end{equation}
holds for each Lipschitz function $f$. By Remark \ref{grem} (c) this is true for all $X$, which are almost surely bounded below. Thus, only those $X$ with support equal to $\R$ must be considered to find a counterexample. Note that such a counterexample would imply that the condition that $k$ and $m$ in Theorem \ref{genmaintheo} 
have the same parity is also necessary, in general.
\end{enumerate}
\end{remark}

\section{Examples and Applications}\label{examples}

\subsection{First order Stein operators}\label{fo}
In this Subsection we give some examples of first-order distributional transformations, whose existence is guaranteed by Theorem \ref{maintheo} and demonstrate how this theory may be applied to prove certain Stein type characterizations without using the solution of the corresponding Stein equation. We also show, how one can use a coupling of $X$ and $\XB$ to estimate the distance of $\calL(X)$ to a fixed point of the distributional transformation 
induced by $B$. Finally, we show by examle that the distribution of $X^{(B)}$ in general depends on the choice of 
the zeroes of $B$, if these are ambiguous.

\begin{example}
 \begin{enumerate}[(a)]
  \item Let $X$ be a real-valued random variable with $0<E[X^2]<\infty$. Choosing $B(x)=x$ with a single sign change at $0$, we conclude from Theorem \ref{maintheo} that there exists a random variable $X^{gz}$ such that 
  \begin{equation}\label{genzero}
E\bigl[X\bigl(f(X)-f(0)\bigr)\bigr]=E\bigl[X^2\bigr]E\bigl[f'(X^{gz})\bigr]
\end{equation}
holds for all Lipschitz-continuous functions $f$ on $\R$. Obviously, if $X$ has mean zero, then $X^{gz}$ has the $X$-zero biased distribution from \cite{GolRei97}. Thus, in general, we say that $X^{gz}$
has the \textit{generalized $X$-zero biased distribution} and we call the mapping $\calL(X)\mapsto\calL(X^{(gz)})$ the \textit{generalized zero bias transformation}.
\item Under the same assumptions on $X$ as in (a) we now choose $B(x):=x-E[X]$. Then, 
\begin{equation*}
 \alpha=E\bigl[(X-E[X])^2\bigr]=\Var(X)
 \end{equation*}
and, again by Theorem \ref{maintheo}, we find that there is a random variable $X^{nz}$ such that 
\begin{equation}\label{nonzero}
 E\bigl[(X-E[X])f(X)\bigr]
 =\Var(X)E\bigl[f'(X^{nz})\bigr]\,,
\end{equation}
where we have used that $E[B(X)]=0$ in this case. Again, whenever $X$ has mean zero, the distribution of $X^{nz}$ reduces to the $X$-zero biased distribution. In general, we call it the 
\textit{$X$-non-zero biased distribution}. Note that the existence of this distribution already follows from Theorem 2.1 in \cite{GolRei05b}, as $B$ satisfies their orthogonality relation in this case. 
\end{enumerate}
\end{example}

Next, we show by example how the existence of such distributional transformations may be used to prove a Stein type characterization of a given distribution, which is a fixed point of the 
distributional transformation. We first need the following definition.
\begin{definition}
 Let $\sigma>0$ and $Z_\sigma\sim N(0,\sigma^2)$. Then, the distribution of $Y_\sigma:=\abs{Z_\sigma}$ is called the \textit{half-normal distribution} or \textit{modulus normal distribution} with parameter $\sigma^2=E[Y_\sigma^2]$. 
 Further, we say that $W_\sigma$ has the \textit{negative half-normal distribution} with parameter $\sigma^2$, if $-W_\sigma$ has the half-normal distribution with parameter $\sigma^2$.
\end{definition}

\begin{proposition}\label{charid}
Let $X$ be a real-valued random variable such that $0<E[X^2]<\infty$. Then $\calL(X)$ is a fixed point of the generalized zero bias transformation if and only if it is a mixture of a half-normal and a negative half-normal distribution with the same parameter.    
\end{proposition}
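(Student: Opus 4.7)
The plan is to verify each implication by substituting a small, explicit class of test functions into the defining identity \eqref{genzero}.

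For the ``if'' direction, by Proposition \ref{mix} it suffices to show that the pure half-normal $Y_\sigma$ and the pure negative half-normal $W_\sigma$ are themselves fixed points of the generalized zero bias transformation; any mixture of them then automatically satisfies \eqref{genzero} because $E[Y_\sigma^2]=E[W_\sigma^2]=\sigma^2$ forces $E[X^2]=\sigma^2$ for the mixture as well. For $Y_\sigma$, integration by parts of $\int_0^\infty yf(y)\cdot\sqrt{2/(\pi\sigma^2)}\,e^{-y^2/(2\sigma^2)}\,dy$ yields $E[Y_\sigma f(Y_\sigma)]=\sigma^2 E[f'(Y_\sigma)]+2\sigma^2 p_{Y_\sigma}(0^+)f(0)$, and since the boundary factor $2\sigma^2 p_{Y_\sigma}(0^+)$ equals $\sigma\sqrt{2/\pi}=E[Y_\sigma]$, a rearrangement is exactly \eqref{genzero}. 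The case $W_\sigma$ is symmetric.

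For the ``only if'' direction, suppose $X$ is a fixed point and set $\sigma^2:=E[X^2]>0$. Theorem \ref{maintheo} applied with $B(x)=x$, $m=1$, and the single sign change at $0$ guarantees that $\calL(X^{gz})$ is absolutely continuous; since $\calL(X)=\calL(X^{gz})$, $X$ itself admits a Lebesgue density $p$. For each pair $0<a<b$ I plug into \eqref{genzero} the Lipschitz test function $f_{a,b}(x):=\min\bigl((x-a)_+,\,b-a\bigr)$, which satisfies $f_{a,b}(0)=0$ and $f_{a,b}'=\mathbf{1}_{(a,b)}$ a.e.; the identity takes the form $\int_a^b x(x-a)p(x)\,dx+(b-a)\int_b^\infty xp(x)\,dx=\sigma^2\int_a^b p(x)\,dx$. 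Differentiating in $b$, the boundary contributions at $b$ cancel neatly and one is left with $\int_b^\infty xp(x)\,dx=\sigma^2 p(b)$ for a.e.\ $b>0$. The left hand side is absolutely continuous in $b$, so $p$ admits a $C^1$ version on $(0,\infty)$, and a further differentiation produces the ODE $\sigma^2 p'(b)=-bp(b)$; hence $p(b)=C_+e^{-b^2/(2\sigma^2)}$ on $(0,\infty)$. A mirror-image argument with Lipschitz test functions supported on $(-\infty,0]$ yields $p(b)=C_-e^{-b^2/(2\sigma^2)}$ on $(-\infty,0)$. Normalization identifies $C_\pm\sigma\sqrt{\pi/2}$ as $P(X>0)$ and $P(X<0)$, which recognizes $p$ as the density of a mixture of $Y_\sigma$ and $W_\sigma$ with weights $P(X>0)$ and $P(X<0)$.

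The main obstacle I anticipate is the rigorous justification of the differentiation steps in $b$ (in particular, controlling the a.e.\ ambiguity of $p$). The first differentiation is essentially automatic once one observes that $\int_b^\infty xp(x)\,dx$ is a $C^1$ function of $b$ by the fundamental theorem of calculus; the second is then forced by the pointwise equality $\sigma^2 p(b)=\int_b^\infty xp(x)\,dx$ together with standard ODE theory. As a self-contained alternative I would reformulate \eqref{genzero} with $f(x)=e^{i\xi x}$ as the first order ODE $\phi_X'(\xi)+\sigma^2\xi\,\phi_X(\xi)=iE[X]$ for the characteristic function of $X$, solve it uniquely, and recognize the resulting function as the characteristic function of the asserted mixture, whose mean parameter sweeps out $[-\sigma\sqrt{2/\pi},\sigma\sqrt{2/\pi}]$ as the mixing weight varies.
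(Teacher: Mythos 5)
Your argument is correct, but it routes both implications differently from the paper. For the ``only if'' part the paper does not need test functions at all: since $\calL(X)=\calL(X^{gz})$, it simply reads off the explicit density formula of Remark \ref{remmt} (d), which defines $p$ pointwise by $p(t)=\frac{1}{E[X^2]}E\bigl[X\bigl(1_{\{0\leq t\leq X\}}-1_{\{X<t<0\}}\bigr)\bigr]$ and hence yields the integral equation $\sigma^2 p(t)=\int_t^\infty sp(s)\,ds$ (for $t>0$) with no almost-everywhere ambiguity to worry about; from there the two proofs coincide (differentiate, solve $\sigma^2p'=-tp$ on each half-line, normalize). Your derivation of the same integral equation from the piecewise-linear test functions $f_{a,b}$ plus the absolute continuity supplied by Theorem \ref{maintheo} is perfectly sound, and your handling of the a.e.\ versions (continuity of $b\mapsto\int_b^\infty sp(s)\,ds$, then choosing the continuous version of $p$) is exactly the care that route requires; it just costs a little more work than quoting the remark. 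For the ``if'' part the paper merely gestures at a Fubini computation, whereas your reduction via Proposition \ref{mix} is arguably cleaner and more structural: the essential observation is that both components have the same $\alpha_s=E[X_s^2]=\sigma^2$, so the index $J$ is not re-weighted and the biased mixture is the same mixture of the (fixed) component laws. Two small caveats: the boundary constant in your integration by parts should be $\sigma^2 p_{Y_\sigma}(0^+)=\sigma\sqrt{2/\pi}=E[Y_\sigma]$ (equivalently $2\sigma^2$ times the $N(0,\sigma^2)$ density at $0$), not $2\sigma^2 p_{Y_\sigma}(0^+)$ --- harmless, since the resulting identity you state is the right one; and the characteristic-function alternative you sketch is not yet complete as written, because after solving $\phi_X'(\xi)+\sigma^2\xi\phi_X(\xi)=iE[X]$ one still has to argue that the solution is the characteristic function of one of the asserted mixtures, i.e.\ that $\abs{E[X]}\leq\sigma\sqrt{2/\pi}$ is forced (solutions with larger $\abs{E[X]}$ solve the same ODE and are only excluded by showing they are not characteristic functions), so keep the density argument as your main proof.
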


\begin{proof}
Let the distribution of $X$ be a fixed point of the generalized zero-bias transformation. Then, from Remark \ref{remmt} (d) we know that $X$ has an absolutely continuous distribution with density $p$ given by 
\begin{equation}\label{pform}
p(t)=\frac{1}{E[X^2]}E\Bigl[X\bigl(1_{\{0\leq t\leq X\}}-1_{\{X<t<0\}}\bigr)\Bigr]\,.
\end{equation}
For $t>0$ we thus have 
\begin{align}\label{pform1}
p(t)&=\frac{1}{E[X^2]}E\Bigl[X1_{\{0\leq t\leq X\}}\Bigr]=\frac{1}{E[X^2]}\int_\R s1_{[t,\infty)}(s) p(s)ds\notag\\
&=\frac{1}{E[X^2]}\int_t^\infty sp(s)ds\,.
\end{align}
Similarly, for $t<0$ we can show that 
\begin{equation}\label{pform2}
p(t)=\frac{-1}{E[X^2]}\int_{-\infty}^tsp(s)ds\,.
\end{equation}
From \eqref{pform1} and \eqref{pform2} we conclude that $p$ is continuously differentiable on $(0,\infty)$ and on 
$(-\infty,0)$ and that 
\begin{equation}\label{dglp}
p'(t)=\frac{-tp(t)}{E[X^2]}
\end{equation}
for each $t\not=0$. From \eqref{dglp} we see, that 
\begin{equation}\label{prep1}
p(t)=p(0+)\exp\Bigl(\frac{-t^2}{2E[X^2]}\Bigr)
\end{equation}
for $t>0$ and 
\begin{equation}\label{prep2}
p(t)=p(0-)\exp\Bigl(\frac{-t^2}{2E[X^2]}\Bigr)
\end{equation}
for $t<0$. Here, we used the shorthands $p(0+):=\lim_{t \downarrow 0}p(t)$ and\\ $p(0-):=\lim_{t\uparrow 0}p(t)$.
The claim now follows from \eqref{prep1} and \eqref{prep2}.\\
Conversely, if the distribution of $X$ is such a mixture, then, by a standard computation involving Fubini's theorem, one easily verifies that $X$ satisfies 
\[E\bigl[X^2\bigr]E\bigl[f'(X)\bigr]=E\bigl[X\bigl(f(X)-f(0)\bigr)\bigr]\,,\]
and, hence, that $\calL(X)$ is a fixed point of the generalized zero bias transformation. We omit the details.
\end{proof}

From Proposition \ref{charid} we directly infer the following Stein characterization of the class of half-normal distributions, whose derivation does not make use of the solution to any Stein equation.

\begin{corollary}
A nonnegative random variable $X$ with $0<E[X^2]<\infty$ has the half-normal distribution with parameter $\sigma^2=E[X^2]$, if and only if 
\begin{equation}\label{Steinidhn}
E\bigl[X^2\bigr]E\bigl[f'(X)\bigr]=E\bigl[X\bigl(f(X)-f(0)\bigr)\bigr]
\end{equation}
holds for all Lipschitz-continuous functions $f:[0,\infty)\rightarrow\R$.
\end{corollary}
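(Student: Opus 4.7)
The plan is to deduce the corollary almost immediately from Proposition \ref{charid}, once the two notions of ``Stein identity'' are reconciled. The key observation is that the identity \eqref{Steinidhn} is precisely the fixed-point equation \eqref{genzero} for the generalized zero bias transformation, restricted to test functions on $[0,\infty)$. So the main task is to verify that under the assumption $X\geq0$ almost surely, restricting the class of test functions from Lipschitz functions on $\R$ to Lipschitz functions on $[0,\infty)$ does not lose any information.

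First, I would argue the ``only if'' direction: if $X$ is half-normal with parameter $\sigma^2=E[X^2]$, then Proposition \ref{charid} (with zero mass on the negative half-normal component) tells us that $\calL(X)$ is a fixed point of the generalized zero bias transformation, so \eqref{genzero} holds for all Lipschitz $f:\R\to\R$. Restricting such an $f$ to $[0,\infty)$ and noting that $X\geq0$ makes $E[f'(X)]$, $E[Xf(X)]$, and $f(0)$ depend only on the restriction, yields \eqref{Steinidhn}.

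For the ``if'' direction, suppose \eqref{Steinidhn} holds for every Lipschitz $f:[0,\infty)\to\R$. Given any Lipschitz $g:\R\to\R$, its restriction $g|_{[0,\infty)}$ is Lipschitz on $[0,\infty)$, and because $X$ is nonnegative, both sides of \eqref{genzero} for $g$ coincide with the corresponding sides of \eqref{Steinidhn} for $g|_{[0,\infty)}$. Hence $\calL(X)$ is a fixed point of the generalized zero bias transformation. By Proposition \ref{charid}, $\calL(X)$ is then a mixture of a half-normal and negative half-normal distribution with common parameter. The nonnegativity of $X$ forces the weight on the negative half-normal component to be zero, so $X$ is half-normal; the parameter must equal $E[X^2]$ by definition.

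There is essentially no obstacle beyond bookkeeping. The only minor point to check is that one really is allowed to use $C_c(\R)$-style test functions restricted to $[0,\infty)$ without loss; this is immediate because $X\geq0$ makes all expectations in \eqref{genzero} insensitive to the values of $g$ on $(-\infty,0)$, and because every Lipschitz function on $[0,\infty)$ extends to a Lipschitz function on $\R$ (for example, by reflection $x\mapsto f(|x|)$ if one wants a concrete extension). No further computation or Stein-equation solving is required.
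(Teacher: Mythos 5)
Your argument is correct and matches the paper's intended derivation: the corollary is stated there as a direct consequence of Proposition \ref{charid}, and your reduction of \eqref{Steinidhn} to the fixed-point identity \eqref{genzero} via restriction/Lipschitz extension, followed by killing the negative half-normal component by nonnegativity of $X$, is exactly that inference spelled out. No gap beyond the routine bookkeeping you already note.
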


\begin{remark}\label{charrem}
\begin{enumerate}[(a)]
 \item The statement of Proposition \ref{charid} can be generalized to more general biasing functions $B$ with one sign change point $x_1$ such that $B(x)(x-x_1)\geq0$ on $\R$. Indeed, in this case, one can derive the formula 
\begin{align*}
 p'(t)=-\frac{1}{\alpha}B(t)p(t)
\end{align*}
for all $t\not=x_1$, which is analogous to \eqref{dglp} and which implies that the $\log$-derivative of $p$ is given by $-B/\alpha$. Hence, the family of denisties $p$ giving rise to fixed points of the distributional transformation 
$\calL(X)\mapsto\calL(X^{(B)})$ can be reconstructed as before. 
\item Suppose that the distribution of $Z$ is a fixed point of the distributional transformation in (a). Up to dividing $B$ by a constant, which does not change the distributional transformation, we can assume that 
\[E\bigl[B(Z)(Z-x_1)\bigr]=1\,,\]
i.e. $-B$ is the $\log$-derivative of the density $p$ of $Z$. 
Then, the Stein equation from the density approach (see e.g. \cite{CGS}) for $Z$ corresponding to a test function $h$ such that $E[h(Z)]$ exists, reads
\begin{equation*}
 f'(x)-B(x)f(x)=h(x)-E[h(Z)]
\end{equation*}
and is solved by 
\begin{equation*}
 f_h(x)=\frac{1}{p(x)}\int_a^b\bigl(h(t)-E[h(Z)]\bigr)p(t)dt\,,
\end{equation*}
where we suppose that the support of $\calL(Z)$ is given by the interval $\abquer$ for some $-\infty\leq a<b\leq\infty$.
The law of $Z$ is  then usually chracterized by the identity 
\begin{equation}\label{chardens}
 E\bigl[f'(Z)-B(Z)f(Z)\bigr]=f(b-)-f(a+)\,,
\end{equation}
valid for all functions $f$ from some large function class $\F$. 
If $h$ is Lipschitz-continuous, one typically has bounds for $f_h$ of the form 
\begin{equation*}
 \fnorm{f_h}\leq c_0\fnorm{h'}\,,\quad\fnorm{f_h'}\leq c_1\fnorm{h'}\quad\text{and}\quad\fnorm{f_h''}\leq c_2\fnorm{h'}
\end{equation*}
for some finite constants $c_0,c_1$ and $c_2$ (see \cite{CGS}, again).\\
Now, suppose that $X$ is given and that $\XB$ has the generalized $X-B$ biased distribution and is constructed on the same space as $X$. Then, for a $1$-Lipschitz function $h$, we can estimate 
\begin{align}
 &\;\Babs{E\bigl[h(X)\bigr]-E\bigl[h(Z)\bigr]}=\Babs{E\bigl[f_h'(X)-B(X)f_h(X)\bigr]}\notag\\
 &=\Babs{E\bigl[f_h'(X)-f_h'(\XB)\bigr]+E\bigl[f_h'(\XB)\bigr](1-\alpha)-f_h(x_1)E\bigl[B(X)\bigr]}\notag\\
 &\leq c_2E\babs{X-\XB}+c_1\abs{1-\alpha}+\babs{f_h(x_1)}\babs{E\bigl[B(X)\bigr]}\label{da1}\\
 &\leq c_2E\babs{X-\XB}+c_1\abs{1-\alpha}+c_0\babs{E\bigl[B(X)\bigr]}\label{da2}\,,
 \end{align}
where 
\begin{equation*}
 \alpha=E\bigl[B(X)(X-x_1)\bigr]\,.
\end{equation*}
From \eqref{chardens} with $f(x)=x-x_1$ we presume that $\alpha$ should be close to one, if $\calL(X)\approx\calL(Z)$. Thus, the second term in \eqref{da2} (or \eqref{da1}) 
should be close to zero. Also, if we can couple $\XB$ close to $X$, then the first term should be small, too. In many cases, we have that $E[B(Z)]=0$, as is suggested by taking $f(x)\equiv1$ in \eqref{chardens}, 
and from which we conclude 
that the third term in \eqref{da2} is also close to zero and, hence that \eqref{da2} gives a good estimate of the \textit{Wasserstein distance}
\begin{equation*}
 d_\W\bigl(\calL(X),\calL(Z)\bigr)=\sup_{h\in\Lip(1)}\Babs{E\bigl[h(X)\bigr]-E\bigl[h(Z)\bigr]}
\end{equation*}
between $\calL(X)$ and $\calL(Z)$. Here, $\Lip(1)$ denotes the class of $1$-Lipschitz functions $h$. However, there are examples where $E[B(Z)]\not=0$ and, hence, where 
one cannot expect $\eqref{da2}$ to be small. For instance, if $Z$ has the exponential distribution with parameter $1$, then $\frac{d}{dx}\log p(x)\equiv-1$ and the function
$B(x):=\sign(x)$ on $\R$ has one sign change at $0$ and satisfies $E[B(Z)]=1$. Furtunately, in this case one can show that 
\[f_h(0+)=\lim_{x\downarrow0}f_h(x)=0\]
and, hence, \eqref{da1} might still give a useful estimate.\\
In a nutshell, if the distribution of $Z$ is a fixed point of the distributional transformation induced by $B$ and we somehow conjecture that $\calL(X)\approx\calL(Z)$ and 
if we can can couple $X$ and $\XB$ sufficiently close, then we should be able to accurately estimate the (Wasserstein) distance between $\calL(X)$ and $\calL(Z)$ by the above procedure.  
\end{enumerate}
\end{remark}

The following example illustrates the dependence of the distribution of $X^{(B)}$ on the choice of the sign change points, if there are non-trivial intervals, where $B$ vanishes identically and, if the 
orthogonality relations from \cite{GolRei05b} do not hold.
\begin{example}\label{ambi}
 Let $m=1$ and consider a measurable function $B:\R\rightarrow\R$ such that there are real numbers $a<b$ with $B(x)\leq0$ for $x\in(-\infty,a]$, $B(x)=0$ for $x\in(a,b]$ and $B(x)\geq0$ for $x\in(b,\infty)$. Also, 
 let $X$ be a real-valued random variable such that $E\abs{X^jB(X)}<\infty$ for $j=0,1$ and suppose that 
 \begin{equation*}
  \alpha:=E\bigl[B(X)(X-a)\bigr]\not=0\quad\text{and}\quad\beta:=E\bigl[B(X)(X-b)\bigr]\not=0\,.
 \end{equation*}
From Remark \ref{remmt} (d) we know that a density $p$ for the distribution of $X^{(B;a)}$ is given by 
\begin{equation*}
 p(t)=\frac{1}{\alpha}E\bigl[B(X)\bigl(1_{\{a\leq t\leq X\}}-1_{\{X< t<a\}}\bigr)\bigr]
\end{equation*}
and that a density $q$ for the distribution of $X^{(B;b)}$ is given by 
\begin{equation*}
 q(t)=\frac{1}{\beta}E\bigl[B(X)\bigl(1_{\{b\leq t\leq X\}}-1_{\{X< t<b\}}\bigr)\bigr]\,.
\end{equation*}
A trite computation then shows that 
\begin{align}\label{amb1}
 \alpha p(t)&=1_{[a,\infty)}(t)E\bigl[B(X)1_{\{X\geq t\}}\bigr]-1_{(-\infty,a)}(t)E\bigl[B(X)1_{\{X< t\}}\bigr]\notag\\
 &=\beta q(t)+1_{[a,b)}(t)\Bigl(E\bigl[B(X)1_{\{X< t\}}\bigr]+E\bigl[B(X)1_{\{X\geq t\}}\bigr]\Bigr)\notag\\
 &=\beta q(t)+E[B(X)]1_{[a,b)}(t)\,.
 \end{align}
We immediately see that, if the orthogonality relation $E[B(X)]=0$ is satisfied, then $\alpha=\beta$ and $p=q$. This is in accordance with the fact that under this condition the $X^{(B)}$ distribution 
is the same for all choices of the zero point of $B$ as stated in \cite{GolRei05b}. If, however, $E[B(X)]\not=0$, then we see from \eqref{amb1} that $p$ and $q$ are generally different and, hence, that the distribution of 
$X^{(B)}$ actually depends on the choice of the zeroes of $B$.\\
For a concrete example, let $X$ be uniformly distributed on $[-1,1]$ and let $B(x)=\max(x,0)=:x^+$. Then, with the notation of the situation above, we can let 
$a=-1$ and $b=0$ and obtain $E[B(X)]=E[X^+]=1/4$ as well as
\begin{equation*}
 \alpha=E\bigl[X^+(X+1)\bigr]=\frac{5}{12}\quad\text{and}\quad\beta=E\bigl[XX^+\bigr]=E\bigl[X^21_{\{X\geq0\}}\bigr]=\frac{1}{6}\,.
\end{equation*}
Hence, in this case 
\begin{align*}
 q(t)&=6E\bigl[X^+1_{\{X\geq t\geq0\}}\bigr]=6\frac{1}{2}\int_t^1s ds\,1_{[0,1]}(t)=\frac{3}{2}(1-t^2)1_{[0,1]}(t)\quad\text{and}\\
 p(t)&=\frac{12}{5}\cdot\frac{1}{6}\cdot\frac{3}{2}(1-t^2)1_{[0,1]}(t)+\frac{1}{4}\cdot\frac{12}{5}1_{[-1,0)}(t)\\
 &=\frac{3}{5}(1-t^2)1_{[0,1]}(t)+\frac{3}{5}1_{[-1,0)}(t)\,.
\end{align*}
Obviously, $p$ and $q$ give rise to two different distributions.
\end{example}

\subsection{Higher order Stein operators}\label{ho}
The purpose of this Subsection is to show, how the existence of certain couplings guaranteed by Theorem \ref{genmaintheo} can be used to assess the distance of the distribution 
of a given random variable $X$ to the distribution of a random variable $Z$, which is characterized by some higher order linear Stein operator $L$ of the form 
\begin{equation}\label{steinop}
 Lf(x)=f^{(m)}(x)-\sum_{j=0}^{m-1}B_j(x)f^{(j)}(x)\,,
\end{equation}
where $m$ is a nonnegative integer and $B_j:\R\rightarrow\R$ is a Borel-measurable function, $j=0,1,\dotsc,m-1$.\\
We first consider the special case $m=2$ of second order Stein operators. We do so for two reasons: 
Firstly, it may be instructive to first consider the easiest particular case that goes beyond the class of first order operators. Secondly, and more importantly, in the case $m=2$ we benefit from the fact that we allowed for an additional parameter $a\in\R$ in Proposition \ref{mainprop}, whereas we refrained from introducing such parameters in the general Theorem \ref{genmaintheo}. In this case, the operator $L$ becomes
\begin{equation*}
 Lf(x)=f''(x)-B_1(x)f'(x)-B_0(x)f(x)\,.
\end{equation*}

\begin{proposition}\label{secor}
Suppose that the functions $B_0,B_1:\R\rightarrow\R$ are Borel-measurable and that there is an $a\in\R$ such that $B_1(x)(x-a)\geq0$ for all $x\in\R$ and that $B_0$ is nonnegative on $\R$.
 Furthermore assume that we are given a real-valued random variable $X$ such that the expressions
 \[E\abs{B_0(X)}\,,\quad E\abs{B_1(X)}\,,\quad E\abs{X^2B_0(X)} \quad\text{and}\quad E\abs{XB_1(X)}\]
 are all finite and such that $\alpha:=\alpha_1+\alpha_2>0$, where
 \begin{equation*}
 \alpha_1=\frac{1}{2}E\bigl[B_0(X)(X-a)^2\bigr]\quad\text{and}\quad\alpha_2=E[B_1(X)(X-a)]\,.
\end{equation*}
Then, there exists a unique distribution for a random variable $X^*$ such that for all $f\in\F^2$ we have 
\begin{equation*}
 \alpha E\bigl[f''(X^*)\bigr]=E\bigl[B_0(X)\bigl(f(X)-f(a)-f'(a)(X-a)\bigr)+B_1(X)\bigl(f'(X)-f'(a)\bigr)\bigr]\,.
\end{equation*}
The law of $X^*$ is always absolutely continuous with respect to the Lebesgue measure.
\end{proposition}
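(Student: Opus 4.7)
The plan is to split the right-hand side of the target identity into a $B_0$-contribution and a $B_1$-contribution, handle each with the tools already proved, and finally combine the two biased variables via a Bernoulli mixture with weights $\alpha_1/\alpha$ and $\alpha_2/\alpha$. Concretely, for the $B_0$-term I would first invoke Theorem \ref{maintheo} in the trivial case $m=0$ (where $B_0$ is nonnegative) to obtain a random variable $X^{(B_0)}$ satisfying $E[B_0(X)g(X)] = E[B_0(X)]\,E[g(X^{(B_0)})]$ for all bounded measurable $g$. Expanding
\begin{equation*}
E\bigl[B_0(X)(F(X)-F(a)-F'(a)(X-a))\bigr]=E[B_0(X)]\,E\bigl[F(X^{(B_0)})-F(a)-F'(a)(X^{(B_0)}-a)\bigr]
\end{equation*}
and applying Proposition \ref{mainprop} to $X^{(B_0)}$ with location parameter $a$, I obtain $Y_0:=\widehat{X^{(B_0)}}_a$ such that this equals
$E[B_0(X)] \cdot \tfrac12 E[(X^{(B_0)}-a)^2]\, E[F''(Y_0)] = \alpha_1 E[F''(Y_0)]$,
where the last equality is the defining property of $X^{(B_0)}$ applied to $g(x)=(x-a)^2$.

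For the $B_1$-term I note that $B_1(x)(x-a)\geq 0$ on $\R$ exactly says $B_1$ has (at most) one sign change at $x_1=a$ and is nonnegative on $[a,\infty)$. Writing $G:=F'\in\F^1$, Theorem \ref{maintheo} with $m=1$ and sign change point $a$ produces $Y_1:=X^{(B_1;a)}$ satisfying
\begin{equation*}
E\bigl[B_1(X)(F'(X)-F'(a))\bigr]=E\bigl[B_1(X)(G(X)-L_G(X))\bigr]=\alpha_2\, E[F''(Y_1)],
\end{equation*}
since $L_G(x)=G(a)=F'(a)$. Adding the two contributions gives exactly $\alpha E[F''(X^*)]$ once I define $X^*:=Y_I$ with $I\in\{0,1\}$ independent of $(Y_0,Y_1)$ and $P(I=0)=\alpha_1/\alpha$, $P(I=1)=\alpha_2/\alpha$. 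The integrability conditions listed in the statement are precisely what one needs to guarantee that the $\alpha_j$ are finite and that all applications of the preceding theorems are legitimate.

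The main technical nuisance, which I would handle separately, is the degenerate regime where one of the $\alpha_j$ vanishes: if $\alpha_1=0$ then $B_0(X)(X-a)^2=0$ almost surely, so $B_0(X)=0$ off $\{X=a\}$ and the $B_0$-term on the right is identically zero (so $X^\ast:=Y_1$ works); if $\alpha_2=0$ then $B_1(X)(X-a)\geq 0$ has zero expectation, hence vanishes a.s., and the Lipschitz bound $|F'(X)-F'(a)|\leq\Lip(F')|X-a|$ shows the $B_1$-term vanishes (so $X^\ast:=Y_0$ works). The hypothesis $\alpha>0$ ensures at least one of these constructions is available. Also, in applying Proposition \ref{mainprop} to $X^{(B_0)}$ one must know $0<E[(X^{(B_0)}-a)^2]<\infty$; the lower bound is exactly $\alpha_1>0$, and the upper bound follows from $E[X^2 B_0(X)]<\infty$ and $E[B_0(X)]<\infty$.

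Uniqueness is the same $C_c(\R)$ argument used in Theorem \ref{maintheo}: given $f\in C_c(\R)$, set $F:=I^2 f\in\F^2$; the identity forces $\int f\,d\mu=\int f\,d\nu$ for any two candidate laws, hence $\mu=\nu$. Absolute continuity of $\mathcal{L}(X^\ast)$ is immediate: whichever of $Y_0,Y_1$ contribute are absolutely continuous (by Proposition \ref{mainprop} and by Theorem \ref{maintheo} with $m=1$ respectively), and a finite mixture of absolutely continuous laws is absolutely continuous. I expect the only real obstacle to be bookkeeping around the degenerate cases $\alpha_1=0$ or $\alpha_2=0$; once those are disposed of, the construction is a direct two-step unfolding of the general machinery already developed.
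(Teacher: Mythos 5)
Your argument is correct, and at the level of architecture it coincides with the paper's proof: you split the right-hand side into the $B_0$- and $B_1$-contributions, treat the $B_1$-part by Theorem \ref{maintheo} with $m=1$ and sign-change point $a$ (so that $L_{f'}\equiv f'(a)$), and glue the two biased variables together via an independent index with weights $\alpha_1/\alpha$ and $\alpha_2/\alpha$; uniqueness and absolute continuity are handled exactly as in the paper. Where you genuinely differ is the $B_0$-step, and there your order of operations is the correct one. The paper's written proof first forms $\Xhat_a$ from $X$ via Proposition \ref{mainprop} and then reweights $\calL(\Xhat_a)$ by the density $B_0$, asserting that the resulting $Y_1$ satisfies $\alpha_1E[f''(Y_1)]=E[B_0(X)(f(X)-f(a)-f'(a)(X-a))]$. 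Biasing and transforming do not commute, and that order does not deliver the identity in general: any law reweighted by $B_0$ is concentrated on $\{B_0>0\}$, while the law characterized by the identity need not be. Concretely, for $P(X=\pm1)=\frac12$, $a=0$, $B_0=1_{[1/2,\infty)}$, one has $\alpha_1=\frac14$ and the identity forces $E[f''(Y_1)]=2\bigl(f(1)-f(0)-f'(0)\bigr)$, i.e.\ the density $2(1-t)1_{[0,1]}(t)$, which charges $[0,1/2)$; with $f(x)=x^3$ the paper's $Y_1$ (density $8(1-t)1_{[1/2,1]}(t)$) gives $\alpha_1E[f''(Y_1)]=1$ instead of $\frac12$. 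The paper's step also needs $E[X^2]<\infty$ to form $\Xhat_a$ at all, which is not among the hypotheses. Your route --- bias by $B_0$ first (the $m=0$ case of Theorem \ref{maintheo}) and then apply Proposition \ref{mainprop} with location $a$ to $X^{(B_0)}$ --- uses only the assumed moments $E\abs{B_0(X)}, E\abs{X^2B_0(X)}<\infty$ together with $\alpha_1>0$, and it reproduces the density formula of Remark \ref{remso} (a); so it repairs the paper's argument rather than merely paralleling it. Two cosmetic points: the $m=0$ identity is stated for bounded test functions, so add the (trivial) remark that it extends to $g(x)=(x-a)^2$ and $g=f-f(a)-f'(a)(\cdot\,-a)$ because $\calL(X^{(B_0)})$ is by construction the $B_0$-reweighted law of $X$ and these $g$ are integrable against $B_0\,dP_X$; and Proposition \ref{mainprop} is literally stated under $0<E[X^2]<\infty$, whereas what its proof uses (and what you verify) is $E[(X^{(B_0)})^2]<\infty$ together with $P(X^{(B_0)}\neq a)>0$, the latter being exactly $\alpha_1>0$. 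Your explicit treatment of the degenerate cases $\alpha_1=0$ and $\alpha_2=0$ is also correct and, if anything, more careful than the paper's.
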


\begin{proof}
Uniqueness is proved in the same way as in the proof of Theorem \ref{maintheo}. So let us just prove the existence of $X^*$. 
First, choose $\Xhat_a$ as in Proposition \ref{mainprop}. Let $\nu:=\calL(\Xhat_a)$ and, if $E[B_0(\Xhat_a)]\not=0$, define $\mu$ by 
 \begin{equation*}
  d\mu(x):=\frac{B_0(x)}{E[B_0(\Xhat_a)]}d_\nu(x)\,,
 \end{equation*}
whereas, if $E[B_0(\Xhat_a)]=0$, let $\mu:=\delta_0$. Finally, let $Y_1\sim\mu$ and construct $Y_2$ and a random index $I\in\{1,2\}$ on the same probability space as $Y_1$ such that 
$I$ is independent of $Y_1,Y_2$, and $Y_2$ has the generalized $X-B_1$-biased distribution and 
\begin{equation*}
 P(I=j)=\frac{\alpha_j}{\alpha_1+\alpha_2}\,,\quad j=1,2\,.
 \end{equation*}
 Note that this implies that 
 \begin{align*}
  \alpha_2E\bigl[g'(Y_2)\bigr]&=E\bigl[B_1(X)\bigl(g(X)-g(a)\bigr)\bigr]\quad\text{and}\\
  \alpha_1E\bigl[f''(Y_1)\bigr]&=E\bigl[B_0(X)\bigl(f(X)-f(a)-f'(a)(X-a)\bigr)\bigr]
 \end{align*}
hold for all sufficiently smooth functions $g$ and $f$, respectively. Hence, letting $X^*:=Y_I$ we have for all $f\in\F^2$ that 
\begin{align*}
 \alpha E\bigl[f''(X^*)\bigr]&=\sum_{j=1}^2\alpha_j E\bigl[f''(Y_j)\bigr]\\
 &=E\bigl[B_0(X)\bigl(f(X)-f(a)-f'(a)(X-a)\bigr)+B_1(X)\bigl(f'(X)-f'(a)\bigr)\bigr]\,,
\end{align*}
as claimed. Also, note that the distribution of $X^*$, being a mixture of absolutely continuous distributions, is itself absolutely continuous.
\end{proof}

\begin{remark}\label{remso}
\begin{enumerate}[(a)]
\item One can check that the function $q$ with 
\begin{equation*}
 q(t):=\frac{1}{\alpha}E\Bigl[\bigl(B_1(X)+B_0(X)(X-t)\bigr)\bigl(1_{\{a\leq t\leq X\}}-1_{\{X<t<a\}}\bigr)\Bigr]
\end{equation*}
is the ($\lambda$-a.e. unique) probability density function of $X^*$.
\item If the operator $L$ in \eqref{steinop} with $m=2$ is characterizing for the distribution of $Z$, then the Stein equation corresponding to a test function $h$ with $E\abs{Z}<\infty$ is given by 
\begin{equation*}
 Lf(x)=h(x)-E[h(Z)]\,,
\end{equation*}
and, often, it has a solution $f_h$ such that the lower order derivatives can be uniformly bounded by constants, i.e. $\fnorm{f_h^{(i)}}\leq c_i$ uniformly over $h$ in some class $\mathcal{H}$ of 
test functions. Then, if one can couple the given random variable $X$ to a $X^*$ such as in Proposition \ref{secor}, then one can easily show that 
\begin{align}\label{coupbound}
 \babs{E[h(X)]-E[h(Z)]}&=\babs{E\bigl[Lf_h(X)\bigr]}\leq c_3E\babs{X-X^*}+\babs{1-\alpha}c_2\\
 &\;+\abs{f'(a)}\babs{E\bigl[B_0(X)(X-a)+B_1(X)\bigr]}+\babs{f(a)}\babs{E\bigl[B_0(X)\bigr]}\notag\,.
 \end{align}
Now, in typical cases one either has that the quantities $f_h(a)$ and $f_h'(a)$ are equal to zero (as is the case for the operator used in \cite{PRR13}), or the expressions 
\[\babs{E\bigl[B_0(X)(X-a)+B_1(X)\bigr]}\quad\text{and}\quad\babs{E\bigl[B_0(X)\bigr]}\]
are close to zero. The latter could be guessed from choosing $f(x)=x-a$ and $f(x)=1$, respectively, together with the assumption that $\calL(X)\approx \calL(Z)$. The same heuristic 
applied to $f(x)=\frac{1}{2}(x-a)^2$ suggests that $\alpha$ should be close to $1$.
Thus, the right hand side of \eqref{coupbound} should be close to zero, if $X$ and $X^*$ are coupled close to each other.
\item As in the first-order case (see Remark \ref{charrem}) one can show that if $\calL(Z)$ is a fixed point of the distributional transformation from Proposition \ref{secor}, then its density $p$ satisfies the 
second order linear differential equation
\begin{equation*}
 \alpha p''(t)=\bigl(B_0(t)-B_1'(t)\bigr)p(t)-B_1(t)p'(t)\,,
 \end{equation*}
from which one should be able to reconstruct the class of fixed points in practice by exploiting boundary conditions like $\int p(t)dt=1$.
\end{enumerate}
\end{remark}

Now, we return to the case of a general $m\geq1$. Henceforth, we denote by $R_{j,f}$ and $L_{j,f}$, respectively, the polynomials from the statement of Theorem \ref{genmaintheo} for $B=B_j$, $j=0,1,\dotsc,m-1$ and define  $Q_{j,f}:=L_{j,f}+R_{j,f}$. In Theorem \ref{highorder} below, we make the assumption that $B_j$ has $0\leq k_j\leq m-j$ sign changes and that $k_j\equiv m-j \mod 2$. Then, by Theorem \ref{genmaintheo}, $Q_{j,f}$ is a polynomial of degree $\leq m-j-1$, $j=0,1,\dotsc,m-1$. Also, assume that $X$ is a real random variable such that 
$E\abs{B_j(X)X^l}<\infty$ for each $0\leq l\leq m-j$ and $0\leq j\leq m-1$. Then, for $j=0,1,\dotsc,m-1$, we define 
\begin{equation}\label{betaj}
 \beta_j:=\frac{1}{(m-j)!}E\Bigl[B_j(X)\Bigl(X^{m-j}-\sum_{l=1}^{k_j} x_l^{m-j}\prod_{r\not=l}\frac{X-x_r}{x_l-x_r}\Bigr)\Bigr]\,,
\end{equation}
which is always nonnegative by Theorem \ref{genmaintheo}. 

\begin{theorem}\label{highorder}
 With the above notation and assumptions, suppose that for each $j=0,1,\dotsc,m-1$ the function $B_j$ has $0\leq k_j\leq m-j$ sign changes, where $k_j\equiv m-j \mod 2$. 
 Furthermore, assume that there is some $j\in\{0,1,\dotsc,m-1\}$ such that $\beta_j>0$ and let $\beta:=\sum_{j=0}^{m-1}\beta_j>0$. Then, there exists a unique distribution for a 
 random variable $X^*$ such that for all $f\in\F^m$ we have 
 \begin{equation}\label{hoid}
  \sum_{j=0}^{m-1}E\Bigl[B_j(X)\bigl(f^{(j)}(X)-Q_{j,f^{(j)}}(X)\bigr)\Bigr]=\beta E\Bigl[f^{(m)}\bigl(X^*\bigr)\Bigr]\,.
 \end{equation}
The law of $X^*$ is always absolutely continuous with respect to the Lebesgue measure.
 \end{theorem}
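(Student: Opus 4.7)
My plan is to split the sum on the left-hand side of (\ref{hoid}) into pieces, apply Theorem \ref{genmaintheo} to each piece separately, and then form $X^*$ as a mixture of the resulting biased random variables in the same spirit as Proposition \ref{mix} and Proposition \ref{secor}. Uniqueness will follow via the standard argument from Theorem \ref{maintheo}: substituting $f:=I^m g$ for $g\in C_c(\R)$ (note $f\in\F^m$ and the moment hypotheses on $X$ make the left-hand side well-defined) reduces (\ref{hoid}) to $\int g\,d\calL(X^*) = (\text{quantity independent of }\calL(X^*))/\beta$, which pins down $\calL(X^*)$ since $C_c(\R)$ separates probability measures.

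For existence, fix $f\in\F^m$ and note that $f^{(j)}\in\F^{m-j}$ for $0\leq j\leq m-1$. I would split the index set into $J_+:=\{j:\beta_j>0\}$ and $J_0:=\{j:\beta_j=0\}$. For each $j\in J_+$, all hypotheses of Theorem \ref{genmaintheo} with biasing function $B_j$, sign-change count $k_j$, and derivative order $m-j$ are satisfied: the parity condition and the moment bounds on $X$ are given, and the identity $\beta_j=\alpha_jE[Y^{m-k_j}]/(m-k_j)!$ that emerges in the proof of Theorem \ref{genmaintheo} (where $Y$ denotes the generalized $X$--$B_j$ biased variable) shows that $\beta_j>0$ both forces $\alpha_j>0$ and rules out the Dirac-at-zero degeneracy in the case $k_j=0$. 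Theorem \ref{genmaintheo} then yields an absolutely continuous random variable $Y_j$ satisfying
\begin{equation*}
E\bigl[B_j(X)\bigl(f^{(j)}(X)-Q_{j,f^{(j)}}(X)\bigr)\bigr]=\beta_jE\bigl[f^{(m)}(Y_j)\bigr].
\end{equation*}

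For $j\in J_0$ no such transformation is directly available, but I plan to check that the corresponding left-hand side already vanishes, so the law of $Y_j$ may be chosen arbitrarily. The contrapositive of the displayed identity for $\beta_j$ shows that $\beta_j=0$ forces either $\alpha_j=0$, in which case the sign structure $B_j(x)\prod_l(x-x_l)\geq0$ combined with $E\bigl[B_j(X)\prod_l(X-x_l)\bigr]=0$ confines $B_j(X)$ almost surely to the sign-change points $x_1,\dotsc,x_{k_j}$, or (only possible when $k_j=0$) the $X$--$B_j$ biased distribution to be $\delta_0$, which confines $B_j(X)$ to $\{0\}$. In both situations the function $f^{(j)}-Q_{j,f^{(j)}}$ vanishes on the support of $B_j(X)$ under $P$: by Lagrange interpolation of $L_{j,f^{(j)}}$ at $x_1,\dotsc,x_{k_j}$ together with the factor $\prod_l(x-x_l)$ in (\ref{Rf}) for the first case, and by the Taylor form (\ref{RF0}) of $R_{j,f^{(j)}}$ at the origin for the second. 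Hence $E\bigl[B_j(X)\bigl(f^{(j)}(X)-Q_{j,f^{(j)}}(X)\bigr)\bigr]=0$.

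Finally, I would introduce a random index $I$ independent of the family $(Y_j)_{j\in J_+}$ with $P(I=j)=\beta_j/\beta$ and set $X^*:=Y_I$. Summing the displayed identity over $j\in J_+$ and adding the zero contributions from $J_0$ gives (\ref{hoid}); absolute continuity of $\calL(X^*)$ is automatic as a convex combination of absolutely continuous laws (the index set $J_+$ is non-empty since $\beta>0$). The only delicate point in this plan is the analysis of the degenerate indices in $J_0$; everything else is a routine assembly of Theorem \ref{genmaintheo} with the mixture construction from Proposition \ref{mix}.
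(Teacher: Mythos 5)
Your proposal is correct and follows essentially the same route as the paper: a mixture $X^*:=Y_I$ of the $X$--$(B_j,m-j)$ biased variables with an independent index $I$ satisfying $P(I=j)=\beta_j/\beta$, combined with the standard $C_c(\R)$ uniqueness argument and absolute continuity as a mixture of absolutely continuous laws. Your explicit treatment of the degenerate indices with $\beta_j=0$ (showing the corresponding summand on the left-hand side of \eqref{hoid} vanishes, and that $\beta_j>0$ guarantees the hypotheses of Theorem \ref{genmaintheo}) is a point the paper's proof passes over silently, so that extra care is welcome rather than a deviation.
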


\begin{proof}
Again, we only prove the existence part.
For each $j=0,1,\dotsc,m-1$ let $Y_j$ have the $X-(B_j,m-j)$ biased distribution, whenever $\beta_j\not=0$ and let $Y_j:=0$, otherwise. Also, let $I\in\{0,1,\dotsc,m-1\}$ be a random index, which is independent 
 of $Y_0,Y_1,\dotsc,Y_{m-1}$ such that 
 \begin{equation*}
  P(I=j)=\frac{\beta_j}{\sum_{l=0}^{m-1}\beta_l}\,,\quad j=0,1,\dotsc, m-1 
 \end{equation*}
and define $X^*:=Y_I$. Then, with the notation $f_j:=f^{(j)}$, $j=0,1,\dotsc,m-1$, by Theorem \ref{genmaintheo} we have 
\begin{align*}
 \beta E\Bigl[f^{(m)}\bigl(X^*\bigr)\Bigr]&=\sum_{j=0}^{m-1}\beta_jE\Bigl[f^{(m)}\bigl(Y_j\bigr)\Bigr]=\sum_{j=0}^{m-1}\beta_jE\Bigl[f_j^{(m-j)}\bigl(Y_j\bigr)\Bigr]\notag\\
 &=\sum_{j=0}^{m-1}E\Bigl[B_j(X)\bigl(f_j(X)-R_{j,f_j}(X)-L_{j,f_j}(X)\bigr)\Bigr]\\
 &=\sum_{j=0}^{m-1}E\Bigl[B_j(X)\bigl(f^{(j)}(X)-Q_{j,f^{(j)}}(X)\bigr)\Bigr]\,.
\end{align*}
\end{proof}

\begin{remark}
 It is possible that a coupling of $X$ and $X^*$ as in Theorem \ref{highorder} will be useful to bound the distance of the distribution of $X$ to that of $Z$ also in the case $m>3$, once 
 such Stein operators are used in practice. Maybe it would first be necessary to adjust this distributional transformation slightly by introducing additional location parameters $a_j$ related 
 to the functions $B_j$, as discussed in Remark \ref{grem} (c).
\end{remark}


\section{Analytical proof of Theorem \ref{maintheo}}\label{anproof}

\begin{lemma}\label{hilfslemma1}
 Let $F:\R\rightarrow\R$ be $m$-times differentiable for an integer $m\geq0$ such that $f:=F^{(m)}>0$ on $\R$. Then, $F$ has at most $m$ zeroes.
\end{lemma}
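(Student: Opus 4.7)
The plan is to prove the lemma by induction on $m$, using Rolle's theorem as the transfer mechanism between consecutive derivatives.

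For the base case $m=0$, the hypothesis reduces to $F=F^{(0)}>0$ on all of $\R$, so $F$ has no zeroes at all, which is consistent with ``at most $0$ zeroes.''

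For the inductive step, I would assume the statement for $m-1$ and let $F$ be $m$-times differentiable with $F^{(m)}>0$ everywhere. Apply the induction hypothesis to $F'$: since $(F')^{(m-1)}=F^{(m)}>0$ on $\R$, the function $F'$ has at most $m-1$ zeroes. Suppose for contradiction that $F$ had at least $m+1$ distinct zeroes $z_1<z_2<\ldots<z_{m+1}$. Then Rolle's theorem applied to each of the $m$ consecutive intervals $[z_i,z_{i+1}]$ produces a zero of $F'$ in $(z_i,z_{i+1})$, yielding at least $m$ distinct zeroes of $F'$, in contradiction with the induction hypothesis. Hence $F$ has at most $m$ zeroes.

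There is no real obstacle; the only subtlety is that the hypothesis $F^{(m)}>0$ (strict inequality) is precisely what lets the induction start, because the base case cannot tolerate $F$ vanishing. The strictness is also what makes Rolle applicable without degeneracy issues (we genuinely get an interior zero of $F'$ between any two zeroes of $F$, not just a critical value). I would also note for completeness that the zeroes of $F$ counted here are distinct points, which is what Rolle requires; multiplicity is not claimed.
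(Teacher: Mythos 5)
Your proof is correct and follows essentially the same route as the paper: induction on $m$ with the base case $F>0$ having no zeroes, and Rolle's theorem producing $m$ distinct zeroes of $F'$ from $m+1$ hypothetical zeroes of $F$, contradicting the induction hypothesis applied to $F'$. No gap to report.
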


\begin{proof}
We prove the claim by induction on $m$. Since $F^{(0)}=f>0$ has no zeroes if $m=0$, the assertion is clear in this case. Now, let $m\geq1$ and assume that the claim is true for $(m-1)$-times differentiable functions. Suppose, contrarily, that $F$ has $m+1$ distinct zeroes $y_1<\ldots<y_m<y_{m+1}$. Then, by Rolle's theorem there exist points 
$z_k\in(y_k,y_{k+1})$ such that $F'(z_k)=0$ for $k=1,\dotsc,m$. Since the points $z_1,\dotsc,z_m$ are necessarily pairwise distinct zeroes of the $(m-1)$-times differentiable function $G:=F'$ with $G^{(m-1)}=F^{(m)}>0$, 
this contradicts the induction hypothesis.
\end{proof}

\begin{lemma}\label{hilfslemma2}
Let $m\geq0$ be an integer and let $G\in C^m(\R)$ be a function such that $f(x):=G^{(m)}(x)\geq\epsilon$ for all $x\in\R$, where $\eps>0$. Then, for each fixed real number $a$ and all $x\geq a$ we have that
\begin{equation*}
 G(x)\geq\sum_{j=0}^{m-1} G^{(j)}(a)\frac{(x-a)^j}{j!}+\frac{\eps}{m!}(x-a)^m\,.
\end{equation*}

Hence, for each polynomial $Q$ of degree at most $m-1$ it follows that\\ $\lim_{x\to\infty}(G(x)+Q(x))=+\infty$ if $m\geq1$ and that $\liminf_{x\to\infty}G(x)\geq\eps$ if $m=0$.
\end{lemma}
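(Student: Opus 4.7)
The plan is to apply Taylor's theorem with integral remainder, which gives the main bound in essentially one line. For $m\geq 1$ and $G\in C^m(\R)$, I would start from the identity
$$G(x)=\sum_{j=0}^{m-1}G^{(j)}(a)\frac{(x-a)^j}{j!}+\frac{1}{(m-1)!}\int_a^x(x-t)^{m-1}G^{(m)}(t)\,dt\,,$$
valid for every $x\in\R$. For $x\geq a$, both $(x-t)^{m-1}$ and $G^{(m)}(t)$ are nonnegative on $[a,x]$, and $G^{(m)}(t)\geq\eps$ by assumption, so the remainder is bounded below by $\eps\cdot\frac{1}{(m-1)!}\int_a^x(x-t)^{m-1}dt=\frac{\eps}{m!}(x-a)^m$, which is exactly the asserted inequality. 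The case $m=0$ must be handled separately since the sum is empty and the integral formula degenerates, but here the claim $G(x)\geq\eps$ for all $x\geq a$ is immediate from $G=f\geq\eps$.

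An alternative route is induction on $m$: assuming the statement for $m-1$ and applying it to $G'\in C^{m-1}(\R)$ (whose $(m-1)$-st derivative is $G^{(m)}\geq\eps$), one obtains the inequality for $G$ by integrating the bound for $G'$ from $a$ to $x$ via the fundamental theorem of calculus and re-indexing the sum. I prefer the Taylor version since it avoids this bookkeeping.

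For the asymptotic consequences, it suffices to read off the degree of the polynomial lower bound. In the case $m\geq 1$, adding any polynomial $Q$ of degree at most $m-1$ to the right-hand side leaves the top-order term $\frac{\eps}{m!}(x-a)^m$ untouched, so $G(x)+Q(x)$ is bounded below by a polynomial in $x$ of degree $m$ with positive leading coefficient, forcing $G(x)+Q(x)\to+\infty$ as $x\to\infty$. In the case $m=0$, the inequality reads $G(x)\geq\eps$ on $[a,\infty)$, which is precisely $\liminf_{x\to\infty}G(x)\geq\eps$.

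There is essentially no substantive obstacle in this argument; the only point requiring care is the $m=0$ edge case, where Taylor's formula collapses and must be treated by hand. Once that is handled, the lemma reduces to a one-step application of a standard identity together with trivial polynomial asymptotics.
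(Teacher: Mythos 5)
Your argument is correct, but it is not the route the paper takes. The paper proves the inequality by induction on $m$: the base case $m=0$ is the hypothesis $G=f\geq\eps$, and the inductive step applies the statement for $m-1$ to $G'$ and integrates the resulting bound from $a$ to $x$ via the fundamental theorem of calculus --- precisely the ``alternative route'' you sketch and then set aside. Your preferred argument instead invokes Taylor's theorem with integral remainder and bounds the remainder term $\frac{1}{(m-1)!}\int_a^x(x-t)^{m-1}G^{(m)}(t)\,dt$ from below by $\frac{\eps}{m!}(x-a)^m$, using $G\in C^m(\R)$ and the nonnegativity of $(x-t)^{m-1}$ on $[a,x]$. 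Both arguments are sound under the stated regularity; the trade-off is that the Taylor version compresses the induction into a single appeal to a standard identity (whose own proof hides the same integration-by-parts or induction bookkeeping), whereas the paper's induction is entirely self-contained and elementary, matching the style of the neighbouring Lemma on zero counts. Your handling of the $m=0$ edge case and of the asymptotic consequences (a degree-$m$ lower bound with positive leading coefficient dominates any polynomial of degree at most $m-1$) agrees with the paper and is complete.
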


\begin{proof}
 The second assertion follows easily from the first one. We prove the first claim by induction on $m\geq0$. If $m=0$, then $G^{(m)}(x)=f(x)\geq\eps$ for each $x\in\R$, which is the claim for $m=0$. Now, assume that $m\geq1$ and that the claim holds for $m-1$. Then, from the fundamental theorem of calculus and the induction hypothesis we 
conclude that for all $x\geq a$

\begin{align*}
 G(x)&=G(a)+\int_a^x G'(t)dt\\
&\geq G(a)+\int_a^x\left(\sum_{j=0}^{m-2}G^{(j+1)}(a)\frac{(t-a)^j}{j!}+\frac{\eps}{(m-1)!}(t-a)^{m-1}\right)dt\\
&=G(a)+\sum_{j=0}^{m-2}G^{(j+1)}(a)\frac{(x-a)^{j+1}}{(j+1)!}+\frac{\eps}{m!}(x-a)^m\\
&=\sum_{j=0}^{m-1} G^{(j)}(a)\frac{(x-a)^j}{j!}+\frac{\eps}{m!}(x-a)^m\,.
\end{align*}
\end{proof}

Recall that for real numbers $x_1<x_2<\ldots<x_m$ we let $J_1:=(-\infty,x_1]$, $J_k:=(x_{k-1},x_k]$ for $2\leq k\leq m$ and $J_{m+1}:=(x_m,\infty)$.

\begin{lemma}\label{hauptlemma1}
Let $f:\R\rightarrow\R$ be a nonnegative continuous function and let $x_1<x_2<\ldots<x_m$ be real numbers. Then, there is a unique function $F\in C^m(\R)$ such that $F^{(m)}=f$ and $(-1)^{m+1-k}F(x)\geq0$ for all $x\in J_k$ and each $1\leq k\leq m+1$. 
\end{lemma}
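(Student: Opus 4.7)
The plan is to handle uniqueness by Lagrange interpolation, and existence by an explicit construction that reduces, via approximation, to the strict case $f\ge\eps>0$; in the strict case, Lemma \ref{hilfslemma1} combined with Rolle's theorem will force the alternating sign pattern. For uniqueness, note first that whenever $F$ satisfies the conclusion, the sign conditions $(-1)^{m+1-k}F\ge 0$ on $J_k$ and $(-1)^{m-k}F\ge 0$ on $J_{k+1}$, together with continuity at $x_k$, force $F(x_k)=0$ for $k=1,\dotsc,m$. Hence if $F_1$ and $F_2$ both satisfy the conclusion, then $F_1-F_2$ is a polynomial of degree at most $m-1$ with $m$ distinct zeros, and therefore $F_1=F_2$.

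For existence, I will set
\[
G(x):=\frac{1}{(m-1)!}\int_0^x(x-t)^{m-1}f(t)\,dt
\]
(or $G:=f$ if $m=0$), let $L_G$ denote the Lagrange interpolation polynomial of degree at most $m-1$ matching $G$ at $x_1<\dotsb<x_m$, and define $F:=G-L_G$. Then $F\in C^m(\R)$, $F^{(m)}=f$, and $F(x_k)=0$ for each $k$; moreover the construction is continuous in $f$ under uniform convergence on compact sets. Assume first that $f\ge\eps>0$. By Lemma \ref{hilfslemma1}, $F$ has at most $m$ zeros, so the $x_k$'s are its only zeros and $F$ has constant sign on each open interval $(x_k,x_{k+1})$ (with the conventions $x_0:=-\infty$ and $x_{m+1}:=+\infty$). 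Lemma \ref{hilfslemma2} applied at $a=x_m$ gives $F(x)\to+\infty$ as $x\to\infty$, so $F>0$ on $(x_m,\infty)$. Next I claim that the sign of $F$ flips strictly across every node: if $F$ had the same sign on both sides of some $x_k$, then $x_k$ would be a local extremum of $F$, whence $F'(x_k)=0$; but Rolle's theorem supplies a zero of $F'$ in each $(x_j,x_{j+1})$ for $1\le j\le m-1$, and together with $F'(x_k)=0$ this yields at least $m$ distinct zeros of $F'$, contradicting Lemma \ref{hilfslemma1} applied to $F'$ since $(F')^{(m-1)}=f>0$. Hence the sign alternates across each $x_k$, which, combined with the boundary sign on $(x_m,\infty)$, gives exactly $(-1)^{m+1-k}F\ge 0$ on $J_k$.

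For general continuous $f\ge 0$, I will approximate by $f_n:=f+1/n$ and let $F_n$ denote the function produced by the construction above. A direct estimate shows $F_n\to F$ uniformly on compact sets, so the closed sign inequalities $(-1)^{m+1-k}F_n\ge 0$ pass to the limit. The main obstacle is the alternation step in the strict case: the entire argument hinges on excluding a double zero of $F$ at any $x_k$, which is where the sharp zero-count of Lemma \ref{hilfslemma1} combines decisively with Rolle's theorem to produce the contradiction.
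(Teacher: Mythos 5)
Your proposal is correct and follows essentially the same route as the paper's proof: construct an $m$-th antiderivative, subtract the Lagrange interpolation polynomial at the nodes, use Lemma \ref{hilfslemma1} for the zero count and Lemma \ref{hilfslemma2} for positivity on $J_{m+1}$, rule out non-alternation via Rolle's theorem applied to $F'$, and pass from $f\ge\eps$ to $f\ge0$ by the approximation $f_n=f+1/n$. The only differences (Cauchy-formula antiderivative based at $0$ instead of the iterated integral $I_{x_m}^m$, and phrasing the sign-flip step as a contradiction rather than showing $F'(x_k)\not=0$ directly) are cosmetic.
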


\begin{proof}
 We first prove the easier uniqueness claim. Let $F_1$ and $F_2$ be two such functions. Since $F_1^{(m)}-F_2^{(m)}=f-f=0$ identically, we know that $Q:=F_1-F_2$ is a polynomial with degree at most $m-1$. By continuity we have $Q(x_k)=F_1(x_k)-F_2(x_k)=0-0=0$ for each $k=1,2,\dotsc,m$. This implies that $Q$ must be the zero polynomial, i.e. 
$F_1=F_2$.\\
Now we turn to the existence of $F$. We first assume that there is an $\eps>0$ such that $f(x)\geq\eps$ for each $x\in\R$. We define the function $G:=I_{x_m}^m f$, where, for a real number $a$, we let $I_af(x):=(I_af)(x):=\int_a^x f(t)dt$ and $I_a^m$ is the $m$-th iterate of the operator $I_a$. Then, $G\in C^m(\R)$ and $G^{(m)}=f$. Furthermore, we have $G(x_m)=0$ and one 
can easily see by induction on $m$ that $G(x)>0$ for all $x\in J_{m+1}$. Since, in general, $G(x_k)\not=0$ for $1\leq k\leq m-1$, we let $L:=L_{G;x_1,\ldots,x_m}$ be the unique (interpolation) polynomial of degree $\leq m-1$ such that $L(x_k)=G(x_k)$ for $1\leq k\leq m$ and define $F:=G-L$. Of course, it holds that $F^{(m)}=G^{(m)}=f$. 
Further, by construction we have $F(x_k)=G(x_k)-L(x_k)=0$ for all $k=1,\dotsc,m$. By Lemma \ref{hilfslemma1}, we conclude that $F$ has exactly the zeroes $x_1,\dotsc,x_m$. In particular, either $F(x)>0$ for each $x\in J_{m+1}$ or $F(x)<0$ for each $x\in J_{m+1}$. The second alternative being impossible by Lemma \ref{hilfslemma2} and the intermediate 
value theorem we conclude that $F$ is strictly positive on $J_{m+1}$. Next, we make sure that $F$ really changes signs at the points $x_1,\dotsc,x_m$. Since $F(x_k)=0$ it is enough to show that $F'(x_k)\not=0$ for each $k=1,\dotsc,m$. From $F(x_k)=F(x_{k+1})=0$ and Rolle's theorem we know that there exist $z_k\in(x_{k},x_{k+1})$ 
such that $F'(z_k)=0$ for $1\leq k\leq m-1$. Since, again by Lemma \ref{hilfslemma1}, we know that $F'$ has at most $m-1$ zeroes, it follows that $F'(x_k)\not=0$ for each $k=1,\dotsc,m$. Thus, $F$ also satisfies the second condition from the statement of the lemma.\\
Now, we only asume that $f\geq0$ is nonnegative and for each $n\in\N$ we let 
$f_n:=f+1/n$. Then, for each $n\geq1$, $f_n\geq n^{-1}$ satisfies the assumptions of the case just treated. Additionally, the sequence $(f_n)_{n\in\N}$ converges uniformly to $f$. This implies that $I_af_n$ converges to $I_a f$ uniformly on compact intervals (for each $a\in\R$), yielding that also $G_n:=I_{x_m}^mf_n$ converges to $G:=I_{x_m}^mf$ 
uniformly on compacts. By the specific Lagrange form of the interpolation polynomial, one can 
easily see that also $L_{G_n;x_1,\ldots,x_m}$ converges pointwise to $L_{G;x_1,\ldots,x_m}$ as $n\to\infty$. Thus, letting $F_n:=G_n-L_{G_n;x_1,\ldots,x_m}$ and $F:=G-L_{G;x_1,\ldots,x_m}$ we know from the first case that  $(-1)^{m+1-k}F_n(x)\geq0$ for all $x\in J_k$ and each $1\leq k\leq m+1$ and since 
$F_n(x)\stackrel{n\to\infty}{\longrightarrow}F(x)$ for each $x\in\R$, the same applies to $F$.
\end{proof}

\begin{lemma}\label{hauptlemma2}
Let $B$ be a measurable biasing function on $\R$, having $m\in\N$ sign changes occuring at the points $x_1<\ldots<x_m$ as above. Then, for each nonnegative, continuous function $f$ on $\R$, there exists a unique function $F$ on $\R$ such that $F^{(m)}=f$ and $F(x)\cdot B(x)\geq0$ for all $x\in\R$. 
Furthermore, letting $G_f:=G_{f;x_1,\dotsc,x_m}:=I_{x_m}^m f$ and denoting by $L_{G_f}$ the interpolation polynomial of degree at most $m-1$ corresponding to the function $G_f$ and to the nodes $x_1,\dotsc,x_m$, we have that $F=G_f-L_{G_f}$.
\end{lemma}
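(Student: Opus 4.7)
The plan is to reduce Lemma \ref{hauptlemma2} to the more symmetric Lemma \ref{hauptlemma1}, by matching the sign pattern of $F$ produced there with that of the biasing function $B$.

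First I would unpack the definition of ``$m$ sign changes'' from Section \ref{s2}: since $B$ has $m$ sign changes at $x_1<\ldots<x_m$, either $(-1)^{m+1-k}B(x)\geq0$ for all $x\in J_k$ and all $1\leq k\leq m+1$ (Case 1), or all these inequalities are reversed (Case 2). I would treat Case 1 and note that Case 2 is immediate by replacing $B$ with $-B$ and the candidate $F$ with $-F$, which preserves both $F^{(m)}=f$ (since $f\geq0$ and $-F$ also has that $m$-th derivative only after another sign flip, so the reduction is really just a notational matter) and the product condition $F\cdot B\geq0$. I then invoke Lemma \ref{hauptlemma1} to obtain the unique $F\in C^m(\R)$ with $F^{(m)}=f$ and $(-1)^{m+1-k}F(x)\geq0$ on each $J_k$, and read off from the proof of that lemma that this $F$ coincides with $G_f-L_{G_f}$: this is immediate when $f$ is strictly positive, and for merely nonnegative $f$ it follows from the approximation $f_n:=f+1/n$ together with uniform-on-compacts convergence $G_{f_n}\to G_f$ and pointwise convergence $L_{G_{f_n};x_1,\ldots,x_m}\to L_{G_f;x_1,\ldots,x_m}$ (the latter from the explicit Lagrange formula).

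The product inequality is then a single line: on each $J_k$,
\begin{equation*}
F(x)B(x)=\bigl[(-1)^{m+1-k}F(x)\bigr]\bigl[(-1)^{m+1-k}B(x)\bigr]\geq0,
\end{equation*}
so $F\cdot B\geq0$ on all of $\R$.

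For uniqueness, suppose $\widetilde F$ also satisfies $\widetilde F^{(m)}=f$ and $\widetilde F\cdot B\geq0$. Then $P:=\widetilde F-F$ is a polynomial of degree at most $m-1$ because $P^{(m)}=0$. Since $B$ has a genuine sign change at each $x_k$---arbitrarily close to $x_k$ on the left $B$ has sign $(-1)^{m+1-k}$ and on the right $(-1)^{m-k}$---the product condition $\widetilde F\cdot B\geq0$ forces $\widetilde F$ to change sign across $x_k$, hence by continuity $\widetilde F(x_k)=0$. Combined with $F(x_k)=0$, this yields $P(x_k)=0$ for each of the $m$ distinct nodes, so $P\equiv0$ and $\widetilde F=F$. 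The step I expect to be the most delicate is the approximation argument that identifies $F$ with $G_f-L_{G_f}$ in the merely nonnegative case, but the necessary convergences are essentially already recorded in the proof of Lemma \ref{hauptlemma1}.
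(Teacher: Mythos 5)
Your core argument is the paper's own: the published proof of Lemma \ref{hauptlemma2} is a one-line pointer to Lemma \ref{hauptlemma1} and its proof, i.e.\ the function constructed there is precisely $G_f-L_{G_f}$ (also for merely nonnegative $f$, via the approximation $f_n=f+1/n$ you cite), and the product inequality is the sign-pattern multiplication on each $J_k$ that you write down. However, your reduction of ``Case 2'' to ``Case 1'' does not work and is not ``just a notational matter'': if $F^{(m)}=f$ then $(-F)^{(m)}=-f$, so the pair $(-B,-F)$ does not solve the same problem unless $f\equiv0$. In fact the assertion is genuinely false for the reversed orientation: take $m=1$, $B(x)=-(x-x_1)$ (one sign change at $x_1$ in the paper's sense, second contingency) and $f\equiv1$; any $F$ with $F'=1$ is eventually positive on $J_2=(x_1,\infty)$, where $B<0$, so no admissible $F$ exists, and $G_f-L_{G_f}=x-x_1$ satisfies $F\cdot B\le0$ rather than $\ge0$. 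The lemma must be read with the orientation actually used in the analytical proof of Theorem \ref{maintheo}, namely $B\ge0$ on $J_{m+1}$ (your Case 1); the attempted symmetry argument should simply be dropped.

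Your uniqueness step also silently strengthens the hypotheses. The paper's definition of $m$ sign changes only imposes the weak inequalities $(-1)^{m+1-k}B\ge0$ on $J_k$ and explicitly allows $B$ to vanish identically on nontrivial subintervals, so there need not be points of strict, opposite sign arbitrarily close to $x_k$ on both sides; without them the deduction ``$\widetilde F$ changes sign across $x_k$, hence $\widetilde F(x_k)=0$'' is unavailable. Under the stated hypotheses uniqueness can in fact fail: for $m=1$, $B=1_{(x_1,\infty)}$ and $f\equiv1$, every $F(x)=x+c$ with $c\ge-x_1$ satisfies $F'=f$ and $F\cdot B\ge0$. So your argument proves uniqueness only under an unstated nondegeneracy assumption on $B$ near the points $x_k$. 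To be fair, the paper's one-line proof glosses over exactly the same point, and only the existence statement together with the identification $F=G_f-L_{G_f}$ is used in the analytical proof of Theorem \ref{maintheo}, where your Case-1 argument coincides with the intended one; but as a self-contained proof of the lemma as stated, these two steps are gaps you would need to repair (by restricting to the orientation $B\ge0$ on $J_{m+1}$ and by adding, or at least flagging, the nondegeneracy needed for uniqueness).
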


\begin{proof}
This follows immediately from Lemma \ref{hauptlemma1} and its proof.
\end{proof}

\begin{proof}[Analytical proof of Theorem \ref{maintheo}]
From the first lines of the probabilistic existence proof, which are independent of the remainder of that proof, we already know that $\alpha=(m!)^{-1}E[B(X)(X-x_1)\cdot\ldots\cdot(X-x_m)]>0$. Further, we concentrate on the non-trivial case that $m\geq1$.
We define the operator $T:C_c(\R)\rightarrow\R$ by 
\begin{equation}\label{defT}
 Tf:=\frac{1}{\alpha m!}E\Bigl[B(X)\bigl(G_f(X)-L_{G_f}(X)\bigr)\Bigr]\,,
\end{equation}
with $G_f$ and $L_{G_f}$ as in the statement of Lemma \ref{hauptlemma2}. Since $\fnorm{f}<\infty$ for $f\in C_c(\R)$, $T$ is well-defined by the assumptions on $X$ and $B$. It is also easy to see that $T$ is linear. In order to invoke the Riesz representation theorem, we aim at showing that $T$ is also positive. Thus, let
$f\in C_c(\R)$ be nonnegative. By Lemma \ref{hauptlemma2} we know that $F(x)\cdot B(x)\geq0$ for all $x\in\R$, where $F(x)=G_f(x)-L_{G_f}(x)$. This immediately implies that $Tf\geq0$ and, hence, $T$ is a positive, linear operator on $C_c(\R)$. By the Riesz representation theorem there exists a unique (positive) Radon measure 
$\nu$ on $(\R,\B(\R))$ such that 
\begin{equation}\label{aneq1}
Tf=\int_\R f(x)d\nu(x)\quad\text{for each }f\in C_c(\R)\,.
\end{equation}
In order to show that $\nu$ is in fact a probability measure, we choose nonnegative functions $f_n\in C_c(\R)$, $n\geq1$, such that $f_n\nearrow1$ pointwise. Since the functions $f_n$ are uniformly bounded (by $1$), one can show similarly as in the proof of Lemma \ref{hauptlemma1}, that the $G_{f_n}$ converge to $G_1$ pointwise 
as $n\to\infty$ and one can show inductively that $\abs{G_{f_n}}\leq \abs{G_1}$, where $G_1(x)=\frac{(x-x_m)^m}{m!}$. Thus, since $B(X)G_1(X)$ is integrable by the assumptions of Theorem \ref{maintheo}, we conclude from the dominated convergence theorem that 
\begin{align}\label{aneq2}
\lim_{n\to\infty}Tf_n&=\lim_{n\to\infty}\frac{1}{\alpha m!}E\Bigl[B(X)\bigl(G_{f_n}(X)-L_{G_{f_n}}(X)\bigr)\Bigr]\notag\\
&=\frac{1}{\alpha m!}E\Bigl[B(X)\bigl(G_1(X)-L_{G_1}(X)\bigr)\Bigr]\,.
\end{align}
Note that by construction $Q(x):=G_1(x)-L_{G_1}(x)$ is a polynomial of degree $m$ such that $Q(x_k)=0$ for $k=1,\dotsc,m$. Hence, there exists $c\not=0$ such that $Q(x)=c\prod_{k=1}^m(x-x_k)$. Since $c=Q^{(m)}=G_1^{(m)}=1$, we conclude from \eqref{aneq2} that
\begin{equation}\label{aneq3}
 \lim_{n\to\infty}Tf_n=\frac{1}{\alpha m!}E\Bigl[B(X)\prod_{k=1}^m(X-x_k)\Bigr]=\frac{\alpha m!}{\alpha m!}=1\,.
\end{equation}
On the other hand, by the monotone convergence theorem and \eqref{aneq1} we have 
\begin{equation}\label{aneq4}
\lim_{n\to\infty}Tf_n =\lim_{n\to\infty}\int_\R f_n(x)d\nu(x)=\int_\R1 d\nu=\nu(\R)\,.
\end{equation}
From \eqref{aneq3} and \eqref{aneq4} we conclude that $\nu$ is indeed a probability measure.\\
Thus, we can choose a random variable $\XB$ on some probability space with distribution $\nu$. In order to show that $\XB$ satisfies \eqref{mainid}, we let $F\in\F^m$ be given. Then, since $F^{(m-1)}$ is Lipschitz, we know that $f:=F^{(m)}$ exists almost everywhere and is bounded. Let $(f_n)_{n\geq1}$ be a sequence 
in $C_c(\R)$ converging to $f$ pointwise such that $\fnorm{f_n}\leq\fnorm{f}$ for all $n\geq1$. Such a sequence can be constructed by convolution with suitable mollifiers with compact support, for example. Then, by an argument similar to that leading to \eqref{aneq2}, one can see, using \eqref{defT}, \eqref{aneq1} and the dominated convergence theorem twice, that 
\begin{align}\label{aneq5}
 \alpha E\Bigl[F^{(m)}\bigl(\XB\bigr)\Bigr]&=\alpha E\Bigl[f\bigl(\XB\bigr)\Bigr]=\alpha\lim_{n\to\infty}E\Bigl[f_n\bigl(\XB\bigr)\Bigr]\notag\\
&=\lim_{n\to\infty}E\Bigl[B(X)\bigl(G_{f_n}(X)-L_{G_{f_n}}(X)\bigr)\Bigr]\notag\\
&=E\Bigl[B(X)\bigl(G_f(X)-L_{G_f}(X)\bigr)\Bigr]\,.
\end{align}
Now, it is easily seen by successive differentiation that $F=G_f+T_{m-1,x_m}F$, where $T_{m-1,x_m}F$ is the Taylor polynomial of order $m-1$ around $x_m$ corresponding to $F$. Since the interpolation polynomial of degree $\leq m-1$ corresponding to $T_{m-1,x_m}F$ is still $T_{m-1,x_m}F$, this implies that 
\begin{equation*}
 L_F=L_{G_f+T_{m-1,x_m}F}=L_{G_f}+L_{T_{m-1,x_m}F}=L_{G_f}+T_{m-1,x_m}F
\end{equation*}
and, hence, 
\begin{equation}\label{aneq6}
 F-L_F=G_f+T_{m-1,x_m}F-\bigl(L_{G_f}+T_{m-1,x_m}F\bigr)=G_f-L_{G_f}\,.
\end{equation}
From \eqref{aneq5} and \eqref{aneq6} it finally folllows that 
\begin{equation*}
\alpha E\Bigl[F^{(m)}\bigl(\XB\bigr)\Bigr]=E\Bigl[B(X)\bigl(F(X)-L_F(X)\bigr)\Bigr]\,,
\end{equation*}
which was to be proved.
\end{proof}

\begin{acknowledgements}
Major parts of this work have been carried out while I was postdoc at TU M\"unchen, Germany. 
I would like to thank Professor Gesine Reinert for inviting me to a visit to Oxford in September 2013 and giving me the opportunity of presenting parts of this work during my stay there.
I am also grateful to an anonymous referee whose comments helped me improve the presentation and exposition of the above results.
\end{acknowledgements}

\normalem
\bibliographystyle{alpha}      
\bibliography{doebler}   

 \end{document}